\documentclass[11pt]{article}
\usepackage[T1]{fontenc}
\usepackage[utf8]{inputenc}
\usepackage{amsmath,amssymb,amsthm,mathtools,mathrsfs}
\usepackage{geometry}
\usepackage{hyperref}
\usepackage{cleveref}
\usepackage[
backend=biber,
style=numeric,
giveninits=true,
url = false,
doi = false,
isbn =false,
maxbibnames=99,
maxalphanames=99,
]{biblatex}
\renewbibmacro{in:}{}
\DeclareFieldFormat{title}{#1} 
\DeclareFieldFormat[article]{title}{#1}
\DeclareFieldFormat[inbook]{title}{#1}
\DeclareFieldFormat[incollection]{title}{#1}

\AtBeginBibliography{\small}
\hypersetup{colorlinks=true, linkcolor=blue, citecolor=blue, urlcolor=blue}

\newtheorem{theorem}{Theorem}[section]
\newtheorem{proposition}[theorem]{Proposition}
\newtheorem{lemma}[theorem]{Lemma}
\newtheorem{corollary}[theorem]{Corollary}
\theoremstyle{definition}
\newtheorem{remark}[theorem]{Remark}

\DeclareMathOperator{\sech}{sech}
\DeclareMathOperator{\supp}{supp}
\DeclareMathOperator{\arcosh}{arcosh}

\newcommand{\E}{\mathbb E}
\newcommand{\Pp}{\mathbb P}
\newcommand{\R}{\mathbb R}

\newcommand{\Par}{\mathcal P}

\title{Replica symmetry up to the de Almeida--Thouless line in the Sherrington--Kirkpatrick model}
\author{Patrick Lopatto}
\date{\today}

\begin{document}
\maketitle

\begin{abstract}
We show that in the Sherrington--Kirkpatrick
model at inverse temperature $\beta$ with uniform external field $h>0$, replica symmetry holds in the regime
\[
\beta^2\E\big[ \sech^4(\beta\sqrt{q}Z+h) \big] \le 1, 
\]
where $Z$ is a standard Gaussian random variable. This confirms a prediction of de Almeida and Thouless (1978). The proof proceeds by a direct analysis of the Parisi measure using the characterization provided by Jagannath and Tobasco (2017).
\end{abstract}

\section{Introduction}

The Sherrington--Kirkpatrick model has been highly influential in the study of spin glasses; see \cite{chatterjee2026michel} for a recent review. At inverse temperature $\beta>0$ and under a uniform external
field with strength $h$, it has Hamiltonian
\[
H_N(\sigma)=\frac{\beta}{\sqrt N}\sum_{1\le i<j\le N}g_{ij}\sigma_i\sigma_j
+h\sum_{i=1}^N \sigma_i,
\qquad \sigma\in\{-1,1\}^N,
\]
where the couplings $(g_{ij})_{i<j}$ are independent, identically distributed standard Gaussians.  Let
\[
F_N(\beta,h)=\frac1N\E\left[ \log\sum_{\sigma\in\{-1,1\}^N}e^{H_N(\sigma)} \right],
\]
and let $F(\beta,h)$ denote the large-$N$  limit of $F_N(\beta,h)$, whose existence is
recalled in \Cref{s:preliminaries}.

We are interested in understanding for what values of $\beta$ and $h$ this model undergoes replica symmetry breaking. 
The $h=0$ case is now classical: in zero external field, the Sherrington--Kirkpatrick model is replica symmetric for $\beta\le 1$, while replica symmetry breaking occurs for $\beta>1$ \cite{aizenman1987some, toninelli2002almeida}. 
We therefore focus here on the case $h>0$. 

In the positive-field regime, there is an influential prediction due to de Almeida and Thouless \cite{de1978stability}. 
We let $q = q(\beta , h)  \in[0,1]$ denote the solution to
\begin{equation}\label{e:q-fp}
q=\E\big[\tanh^2(\beta\sqrt q Z+h) \big], \qquad Z\sim \mathcal N(0,1),
\end{equation}
recalling that the existence and uniqueness of $q$ for all $h>0$ are well known; see \cite[Proposition 1.3.8]{TalagrandBook1}. We define the AT parameter
\begin{equation}\label{e:alpha-def}
\alpha(\beta,h)=\beta^2\E\big[\sech^4(\beta\sqrt q Z+h)\big].
\end{equation}
The replica-symmetric ansatz for the limiting free energy $F(\beta, h)$ is 
\begin{equation}\label{e:PhiRS}
\Phi_{\mathrm{RS}}(\beta,h)
=\log 2+\E\big[ \log\cosh(\beta\sqrt q Z+h)\big]+\frac{\beta^2}{4}(1-q)^2.
\end{equation}
The de Almeida--Thouless prediction is that the replica-symmetric formula should hold throughout the region
$\alpha(\beta,h)\le 1$, with replica symmetry breaking on its complement. 

Previous efforts have yielded significant progress toward establishing this prediction. 
The work \cite{toninelli2002almeida}, mentioned previously in relation to the $h=0$ case, in fact showed that that replica symmetry breaking holds whenever $\alpha ( \beta, h) >1$. 
In \cite{JT2017}, the authors show that replica symmetry holds throughout the region $\alpha(\beta, h) < 1$, except possibly for a bounded exceptional set not containing the zero-field critical point $(\beta, h) = (1,0)$. This result is derived as corollary of a sophisticated analysis of the Parisi solution for $F(\beta, h)$ for general mixed $p$-spin Ising models, which gives several interesting consequences, including a general de Almeida--Thouless-style prediction for the phase diagram of all such models with external field. In particular, the authors of \cite{JT2017} showed that for a generalization of $\alpha$ to the mixed $p$-spin case, which we denote here by $\tilde \alpha$, replica symmetry breaking holds for all $(\beta, h)$ such that $\tilde \alpha(\beta, h) > 1$. However, it was recently shown in \cite{MS2026} that the  criterion proposed in \cite{JT2017} is not correct for the entire phase diagram in general, via explicit counterexamples.

Additionally, replica symmetry was shown in the region 
\[
\beta^2\E\big[\sech^2(\beta\sqrt q Z+h)\big]\le 1,
\]
which is strictly contained in the region predicted by de Almeida and Thouless, by a conditional second moment method argument in \cite{BrenneckeYau2022} (extending previous work of Bolthausen \cite{bolthausen2018morita}). We also mention that the analogous problem for a centered Gaussian external field was studied in \cite{chen2021almeida,kim2025almeida}.

The main contribution of this paper is the following theorem, which shows that replica symmetry holds for all $(\beta, h)$ such that $\alpha(\beta, h) \le 1$. Together with the results of  \cite{toninelli2002almeida}, this establishes the correctness of the de Almeida--Thouless prediction for all $(\beta, h)$ such that $h >0$.

\begin{theorem}\label{t:main}
Fix $\beta>0$ and $h>0$ such that $\alpha(\beta,h)\le 1$, and let $q\in[0,1]$ solve  \eqref{e:q-fp}. Then $F(\beta,h)=\Phi_{\mathrm{RS}}(\beta,h)$. 
\end{theorem}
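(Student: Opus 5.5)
We use the Jagannath--Tobasco first-order optimality characterization of the Parisi measure for the SK model (the case $\xi(t)=\beta^2 t^2/2$). Recall the Parisi formula $F(\beta,h)=\inf_\mu \Par(\mu)$, where $\Par(\mu)=\log 2+u_\mu(0,h)-\frac{\beta^2}{2}\int_0^1 t\,\mu([0,t])\,dt$ and $u_\mu$ solves the Parisi PDE $\partial_t u+\frac{\beta^2}{2}\big(\partial_x^2u+\mu([0,t])(\partial_x u)^2\big)=0$ with $u(1,x)=\log\cosh x$. The functional is strictly convex (Auffinger--Chen), so the minimizer is unique. By Jagannath--Tobasco, $\mu$ is the Parisi measure if and only if $\mu(S)=1$, where $S=\arg\max_{t\in[0,1]} G_\mu(t)$ and
\[
G_\mu(t)=\beta^2\int_t^1\big(\E[(\partial_x u_\mu(s,X_s))^2]-s\big)\,ds ,
\]
with $X$ the associated SDE $dX_s=\beta^2\mu([0,s])\partial_xu_\mu(s,X_s)\,ds+\beta\,dW_s$, $X_0=h$. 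Our plan is to verify that $\mu=\delta_q$ satisfies this criterion whenever $\alpha(\beta,h)\le1$. Evaluating $\Par(\delta_q)$ then gives exactly $\Phi_{\mathrm{RS}}(\beta,h)$ by the standard Cole--Hopf computation, which proves the theorem.

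For $\mu=\delta_q$ everything is explicit. On $[0,q)$ we have $\mu([0,s])=0$, so $X_s=h+\beta W_s$. On $[q,1]$ the Cole--Hopf transform gives $u(s,x)=\log\cosh x+\frac{\beta^2}{2}(1-s)$, hence $\partial_x u=\tanh$. Writing $\phi(s)=\E[(\partial_xu(s,X_s))^2]$, we need $G'(t)=\beta^2(t-\phi(t))$ and must show that $G$ attains its maximum at $t=q$. Condition \eqref{e:q-fp} gives $\phi(q)=q$, since $X_q\sim h+\beta\sqrt q Z$ and $\partial_xu(q,\cdot)=\tanh$.

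\textbf{Interval $[0,q]$.} For $s\le q$, $\partial_xu(s,x)=\E[\tanh(x+\beta(W_q-W_s))]$, so $M_s=\partial_xu(s,X_s)$ is a martingale. Hence $\phi$ is increasing with $\phi'(s)=\beta^2\E[(\partial_x^2u(s,X_s))^2]$. Also $\partial_x^2u(s,X_s)=\E[\sech^2(X_q)\mid\mathcal F_s]$ is a martingale, so $\E[(\partial_x^2 u)^2]$ is increasing in $s$. Therefore $\phi'(s)\le\beta^2\E[\sech^4(X_q)]=\alpha\le1$, and $\phi$ is convex on $[0,q]$. Since $\phi(q)=q$ and $\phi'\le1$, we get $\phi(s)\ge s$ for $s\le q$. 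Thus $G'\le0$ there and $G(t)\le G(q)$... wait, $G'(t)=\beta^2(t-\phi(t))\le0$ means $G$ is decreasing on $[0,q]$, so $G(0)\ge G(q)$. This is the wrong direction, and it comes from a sign error: since $G(t)=\beta^2\int_t^1(\phi-s)\,ds$, we actually have $G'(t)=-\beta^2(\phi(t)-t)$. We need $G$ increasing up to $q$, i.e.\ $\phi(t)\le t$ on $[0,q]$. The correct Jagannath--Tobasco functional is presumably $\int_t^1(\xi'(s)-\xi''(s)\E[(\partial_xu)^2])$, which has the opposite sign. With that sign, the requirement is that $\int_t^q(\phi(s)-s)\,ds\le0$ for $t<q$ and $\int_q^t(\phi(s)-s)\,ds\le0$ for $t>q$. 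I would fix the convention against the cited statement before carrying out the steps below.

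Working in that convention, the two halves are as follows. On $[q,1]$, $X$ solves $dX=\beta^2\tanh X\,ds+\beta\,dW$ and $\partial_x u=\tanh$. By Itô's formula $\tanh(X_s)$ is a martingale, because $u$ solves the PDE. Hence $\phi'(s)=\beta^2\E[\sech^4(X_s)]$. The key point is to show this quantity is nonincreasing in $s\ge q$. Then $\phi'\le\alpha\le1$ after $q$, so $\phi(s)\le s$ for $s\ge q$, which gives the required inequality on that side. I would prove the monotonicity by applying Itô's formula to $\sech^4(X_s)$ under the tilted dynamics, using the drift $\beta^2\tanh$; the drift pushes mass away from the origin, where $\sech^4$ is largest. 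On $[0,q]$, the convexity of $\phi$ established above (it holds in either convention), together with $\phi(q)=q$ and $\phi'(q^-)=\alpha\le1$, gives $\phi(s)\ge q+\alpha(s-q)\ge s$. This is the wrong side for that interval, so the needed inequality there must instead come from the integral condition. I expect the main obstacle to be exactly this step: controlling $\int_t^q(\phi-s)$ on the low side, equivalently the sign conventions and the boundary case $\alpha=1$. My plan would be to use a refined convexity argument with the correct functional, and to handle $\alpha=1$ by continuity of $F$ and $\Phi_{\mathrm{RS}}$ in $(\beta,h)$.
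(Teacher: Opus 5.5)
There is one real gap, on $[q,1]$, and one apparent obstacle on $[0,q]$ that comes from a sign slip.

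\textbf{The interval $[0,q]$.} You have the Jagannath--Tobasco criterion backwards, and your proposed fix makes it worse. The correct statement, the one the paper uses, is that every point of $\supp\mu$ is a \emph{minimizer} of $G_\mu(t)=\int_t^1\frac{\beta^2}{2}(\phi(s)-s)\,ds$. Your derivative $G'(t)=\beta^2(t-\phi(t))$ was never wrong: it is the same as $-\beta^2(\phi(t)-t)$. Only the argmax was wrong. With argmin (equivalently, argmax of $\int_t^1\beta^2(s-\phi(s))\,ds$), the requirement for $t<q$ is $\int_t^q(\phi(s)-s)\,ds\ge 0$, not $\le 0$. The inequality you end up targeting is in fact false. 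Your own martingale/convexity argument gives $\phi(s)\ge q-\alpha(q-s)\ge s$ on $[0,q]$, strictly when $\alpha<1$. So the low side is already settled by what you wrote first, exactly as in Corollary~\ref{cor:left-sign}, and the ``main obstacle'' you flag there is an artifact. The continuity argument for $\alpha=1$ is likewise unnecessary, since all the comparisons are non-strict.

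\textbf{The interval $[q,1]$.} Your key claim, that $s\mapsto\E[\sech^4(X_s)]$ is nonincreasing for $s\ge q$, is false in general. With $v=\sech^2 X_s$ and $dX=\beta^2\tanh X\,ds+\beta\,dW$, It\^o's formula gives
\[
\frac{d}{ds}\E\big[\sech^4(X_s)\big]=\beta^2\,\E\big[v^2(4-6v)\big].
\]
This is positive when $X_q$ sits mostly away from the origin, for example $\beta=1$, $h=3$, where $\alpha\ll 1$. In the tails $\sech^4 x\approx 16e^{-4|x|}$, and the noise raises its expectation faster than the outward drift lowers it. So the heuristic ``the drift pushes mass away from the origin'' fails, and this step is where the real content of the theorem lies.

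The paper never proves monotonicity; it only compares with time $q$, in two regimes.
\begin{itemize}
\item When $\beta^2(1-q)\le 1$, the comparison is easy. The paper uses a first-contact argument; alternatively, $\beta^2\E[v_s^2]\le\beta^2\E[v_s]\le\beta^2(1-q)\le 1$, since $\E[v_s]$ decreases.
\item When $\beta^2(1-q)>1$, the paper first shows $h<\beta^2 q$. This uses monotonicity of $h\mapsto\E\sech^2(h+\sigma Z)$ together with the MMSE bound $\E\sech^2(\sigma^2+\sigma Z)\le(1+\sigma^2)^{-1}$.
\item It then uses a Girsanov $\cosh$-reweighting to write $\E[\sech^4 X_t]=\E[S^2F_\lambda(S)]$ with $S=\sech^2 X_q$. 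Here $F_\lambda$ is decreasing and $\int F_\lambda\,d\nu=1$ for $\nu(ds)=ds/(2\sqrt{1-s})$.
\item It checks that the $S^2$-weighted law of $S$ has increasing density with respect to $\nu$ when $h\le\beta^2 q$. A covariance inequality then gives $\E[\sech^4 X_t]\le\E[S^2]=\alpha/\beta^2$.
\end{itemize}
Your proposal contains no substitute for this argument, so as it stands it does not prove the theorem.
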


The proof of \Cref{t:main} appears in \Cref{s:conclusion}. We give an overview of its main ideas in the next section, after introducing some preliminary concepts. 

\subsection{Acknowledgment} 

The author was partially supported by NSF grant DMS-2450004.

\section{Preliminaries}\label{s:preliminaries}

For the remainder of the paper we fix a pair $(\beta,h)$ satisfying the hypotheses of
Theorem~\ref{t:main}, abbreviate $\alpha = \alpha(\beta, h)$, and let $q$ be the fixed point given by \eqref{e:q-fp}. 
In this section, we collect some results from previous works required for our arguments.

Write $\Pr([0,1])$ for the set of Borel probability measures on $[0,1]$.  Let
$(W_t)_{0\le t\le 1}$ be a standard Brownian motion on a filtered probability space
$(\Omega,\mathcal F,(\mathcal F_t)_{0\le t\le 1},\Pp)$, where $(\mathcal F_t)$ is the usual
augmentation of its natural filtration.

\subsection{Parisi PDE}

For $\mu\in\Pr([0,1])$, let $u_\mu$ denote the solution of the Parisi PDE
\begin{equation}\label{e:Parisi-PDE}
\partial_t u_\mu(t,x)+\frac{\beta^2}{2}\Bigl(u_{\mu,xx}(t,x)+\mu([0,t])u_{\mu,x}(t,x)^2\Bigr)=0,
\qquad u_\mu(1,x)=\log\cosh x.
\end{equation}
The existence of a solution is guaranteed by the following proposition, which combines the statements of 
\cite[Theorem 4]{JTDP2016} and \cite[Lemma 16]{JTDP2016}; see also \cite[Proposition 2]{ACunique}.

\begin{proposition}\label[proposition]{p:PDE-wellposed}
For every $\mu\in\Pr([0,1])$, the terminal-value problem \eqref{e:Parisi-PDE} admits a
unique weak solution $u_\mu$.  Moreover, for every $j\ge 1$ one has
$\partial_x^j u_\mu\in C_b([0,1]\times\R)$, for every $j\ge 0$ one has
$\partial_t\partial_x^j u_\mu\in L^\infty([0,1]\times\R)$, and
\[
|u_{\mu,x}(t,x)|\le 1,
\qquad
0<u_{\mu,xx}(t,x)\le 1
\qquad \text{for all }(t,x)\in[0,1]\times\R.
\]
\end{proposition}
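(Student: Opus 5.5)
The statement to be proved is \Cref{p:PDE-wellposed}, which the paper imports from \cite{JTDP2016}. I would nonetheless prove it by the standard route: first handle atomic measures by the Hopf--Cole transformation, then pass to general $\mu$ using stability in $\mu$.

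\emph{Step 1 (atomic measures).} Let $\mu$ have finitely many atoms, so $m(t)=\mu([0,t])$ is a step function, constant equal to $m_k$ on $[t_k,t_{k+1})$. On each such interval with $m_k>0$, the function $v=e^{m_k u}$ solves the backward heat equation $\partial_t v+\frac{\beta^2}{2}v_{xx}=0$. This gives the explicit recursion
\[
u(t,x)=\frac{1}{m_k}\log \E\big[\exp\big(m_k\,u(t_{k+1},x+\beta(W_{t_{k+1}}-W_t))\big)\big].
\]
On intervals where $m_k=0$, the equation is the plain heat equation. Starting from $u(1,x)=\log\cosh x$, whose derivatives of every order $j\ge1$ are bounded, and iterating backwards, we obtain a classical solution on each piece. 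It is smooth in $x$ with bounded $x$-derivatives of all orders $j\ge1$. The time derivatives $\partial_t\partial_x^j u$ are then bounded, because the equation expresses them through $x$-derivatives.

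\emph{Step 2 (the a priori bounds).} Differentiate the PDE. The function $w=u_x$ satisfies $\partial_t w+\frac{\beta^2}{2}(w_{xx}+2m\,u_x w_x)=0$, which is a linear parabolic equation with bounded drift. Since $|w(1,\cdot)|=|\tanh|\le1$, the maximum principle (equivalently, a Feynman--Kac representation along the associated SDE) gives $|u_x|\le1$.

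For $z=u_{xx}$, differentiate once more to get
\[
\partial_t z+\frac{\beta^2}{2}\bigl(z_{xx}+2m\,u_x z_x+2m z^2\bigr)=0 .
\]
Positivity follows from a Feynman--Kac formula: $z(t,x)=\E[z(1,X_1)\exp(\beta^2\int_t^1 m z\,ds)]$ with $z(1,\cdot)=\sech^2>0$, so $z>0$. For the upper bound, set $y=u_{xx}+u_x^2$. I expect a direct computation, using $m\le1$, to show that $y$ is a subsolution of a linear equation, and $y(1,\cdot)=\sech^2+\tanh^2=1$. The maximum principle would then give $u_{xx}\le 1-u_x^2\le1$.

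Alternatively, on each Hopf--Cole piece one has $u_{xx}=\mathrm{Var}$-type expressions under a tilted measure: $u_{xx}$ equals the tilted average of $u_{xx}(t_{k+1},\cdot)$ plus $m_k$ times the tilted variance of $u_x(t_{k+1},\cdot)$. Combined with $|u_x|\le1$, this gives the recursive bound $u_{xx}\le 1-u_x^2$ directly.

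\emph{Step 3 (general $\mu$).} Approximate a general $\mu$ weakly by atomic measures $\mu_n$; if needed, choose them so that $\int_0^1|\mu_n([0,t])-\mu([0,t])|\,dt\to0$. A Gronwall-type estimate on the difference of solutions, which uses only the bound $|u_x|\le1$, yields the Lipschitz stability
\[
\|u_{\mu}-u_{\nu}\|_\infty\le \frac{\beta^2}{2}\int_0^1|\mu([0,t])-\nu([0,t])|\,dt .
\]
The same argument applies to the $x$-derivatives, because every equation in the hierarchy has bounded coefficients, uniformly in $n$. Hence the sequence $u_{\mu_n}$ is Cauchy in $C_b$ together with all its $x$-derivatives. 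The limit is a weak solution with the stated regularity and bounds, where the strict inequality $u_{xx}>0$ survives the limit through the Feynman--Kac lower bound. Uniqueness of the weak solution follows from the same Gronwall estimate applied to two weak solutions with the same $\mu$.

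\emph{Main obstacle.} I expect the hardest step to be the uniform control of the higher derivatives $\partial_x^j u$ for $j\ge3$ that is needed to pass to the limit, together with a rigorous justification of the maximum principle for the nonlinear equation satisfied by $u_{xx}$. The equations for $\partial_x^j u$ involve products of lower derivatives, so I would run an induction on $j$, using Duhamel's formula and the already established bounds on the lower derivatives.
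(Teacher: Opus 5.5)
The paper gives no argument here. It imports the statement from \cite{JTDP2016} (Theorem 4 and Lemma 16) and \cite{ACunique}, and \cite{JTDP2016} deliberately avoids Cole--Hopf, working directly with weak solutions of the semilinear PDE. Your route is the classical approximation route: Cole--Hopf for atomic $\mu$, a priori bounds for these piecewise-classical solutions, then extension by the Lipschitz estimate in $\mu$. It gives explicit formulas and transparent bounds. The price is that for general $\mu$ your object is a limit, and its relation to an arbitrary weak solution has to be argued separately.

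Your computations are right. Set $\mathcal L=\partial_t+\frac{\beta^2}{2}\partial_x^2+\beta^2 m\,u_x\partial_x$. Then $\mathcal L u_x=0$, $\mathcal L u_{xx}=-\beta^2 m u_{xx}^2$ and $\mathcal L(u_x^2)=\beta^2 u_{xx}^2$. Hence $\mathcal L(u_{xx}+u_x^2)=\beta^2(1-m)u_{xx}^2\ge 0$, and the terminal value $1$ gives $u_{xx}\le 1-u_x^2$, as you expected.

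In the Cole--Hopf variant, the induction hypothesis must be $U''+U'^2\le 1$, not merely $|U'|\le 1$. This is because $u_{xx}+u_x^2=\E_{\mathrm{tilt}}[U''+mU'^2]+(1-m)(\E_{\mathrm{tilt}}U')^2\le 1-(1-m)\operatorname{Var}_{\mathrm{tilt}}(U')$.

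The obstacle you flag is milder than you fear. For $j\ge 3$, $\mathcal L\,\partial_x^j u+j\beta^2 m u_{xx}\,\partial_x^j u$ equals $m$ times a polynomial in $\partial_x^3u,\dots,\partial_x^{j-1}u$. Feynman--Kac with the potential $0<j\beta^2 m u_{xx}\le j\beta^2$ then gives, by induction, bounds uniform over the atomic approximants. Strict positivity also passes to the limit: the drift is bounded by $\beta^2$, so $u_{xx}(t,x)\ge \E[\sech^2(|x|+\beta^2+\beta|W_1-W_t|)]>0$ uniformly in the approximation.

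The step that does not go through as written is uniqueness. Your Gronwall estimate is really a Feynman--Kac bound, and it applies It\^o's formula to the difference of two solutions. That is legitimate for the solutions you construct. A general weak solution $v$ (continuous, $v_x\in L^\infty$, PDE in the sense of distributions) is not yet known to be regular enough for It\^o, so ``the same estimate applied to two weak solutions'' is not justified.

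You need a weak-to-mild step. Let $P_r f(x)=\E f(x+\beta\sqrt r Z)$. The function $v$ grows at most linearly, and $v_t+\frac{\beta^2}{2}v_{xx}=-\frac{\beta^2}{2}m v_x^2\in L^\infty$. Uniqueness for the heat equation in that growth class then shows $v(t,\cdot)=P_{1-t}v(1,\cdot)+\frac{\beta^2}{2}\int_t^1 P_{s-t}[m(s)v_x(s,\cdot)^2]\,ds$, and likewise for $u$. So $\delta=u-v$ satisfies
\[
\delta_x(t,\cdot)=\frac{\beta^2}{2}\int_t^1 \partial_xP_{s-t}\bigl[m(s)(u_x+v_x)(s,\cdot)\,\delta_x(s,\cdot)\bigr]\,ds,
\qquad \|\partial_xP_r f\|_\infty\le\frac{\|f\|_\infty}{\beta\sqrt r},
\]
and a singular Gronwall inequality gives $\delta_x\equiv 0$, hence $\delta\equiv 0$. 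This identification is what the cited well-posedness theorem supplies. Without it, your sketch proves existence, regularity and the bounds, but uniqueness only in the sense that the limit does not depend on the approximating sequence, not uniqueness among all weak solutions.
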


The Parisi functional is
\begin{equation}\label{e:Parisi-functional}
\Par(\mu)=\log 2+u_\mu(0,h)-\frac{\beta^2}{2}\int_0^1 s \, \mu([0,s])\,ds.
\end{equation}

The following theorem, due to Talagrand \cite{Talagrand2006}, establishes the that Parisi functional characterizes the limiting free energy of the Sherrington--Kirkpatrick model. Note the result in \cite{Talagrand2006} is not phrased in terms of a PDE; however, the equivalence to the form used here is discussed in \cite[Section 1]{ACunique}

\begin{theorem}\label{thm:Talagrand}
We have
\[
F(\beta,h)=\inf_{\mu\in\Pr([0,1])}\Par(\mu).
\]
\end{theorem}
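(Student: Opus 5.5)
This is Talagrand's theorem, so in the paper it will be quoted rather than reproved. The plan below is how I would organize a self-contained derivation, reducing everything to atomic measures and then passing to the limit. The first step is to identify $\Par(\mu)$ with Parisi's recursive formula when $\mu$ has finitely many atoms. Take $\mu=\sum_{k} (m_{k}-m_{k-1})\delta_{q_k}$ with $0=q_0\le q_1\le\dots\le q_r=1$ and $0\le m_1\le\dots\le m_r=1$. Then $\mu([0,t])$ is piecewise constant, and on each interval where it equals $m$ the Cole--Hopf substitution $e^{m u}$ turns \eqref{e:Parisi-PDE} into the backward heat equation with diffusion coefficient $\beta^2$. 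Hence
\[
u_\mu(q_{k-1},x)=\frac1{m_k}\log \E\bigl[\exp\bigl(m_k u_\mu(q_k,x+\beta\sqrt{q_k-q_{k-1}}\,Z)\bigr)\bigr],
\]
which is exactly the Ruelle-cascade recursion. Combined with the term $-\frac{\beta^2}{2}\int_0^1 s\,\mu([0,s])\,ds$, this recovers the classical Parisi functional, modulo the normalization of the $\beta^2$ factors between the SK covariance $\xi(s)=\beta^2s^2/2$ and the conventions used here.

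The second step is the upper bound $F\le \inf_\mu \Par(\mu)$, which follows from Guerra's replica-symmetry-breaking interpolation. For atomic $\mu$, I would interpolate between $H_N$ and a family of independent cavity fields built from a Ruelle probability cascade with parameters $(q_k,m_k)$. The time derivative of the interpolated free energy equals $-\frac{\beta^2}{4}$ times an average of $(R_{12}-q_{\ell})^2$ over the cascade-coupled replicas, which is nonpositive. Hence $F_N\le \Par(\mu)$ for every $N$.

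To extend the bound to all $\mu\in\Pr([0,1])$, I would use the Lipschitz continuity of $\mu\mapsto u_\mu(0,h)$ in the $L^1$ distance between distribution functions. This follows from a Duhamel/comparison estimate using $|u_{\mu,x}|\le1$ from \Cref{p:PDE-wellposed}. Atomic measures are dense in this metric, so the bound passes to the infimum. Existence of $\lim F_N$ is not needed at this stage: one can work with $\limsup$ and $\liminf$ separately.

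The third step, the matching lower bound $\liminf_N F_N\ge\inf_\mu\Par(\mu)$, is the main obstacle. Talagrand's approach bounds the remainder term in Guerra's identity from below. He couples two copies of the interpolated system constrained to overlaps in prescribed intervals. Then, via a delicate induction on the number of RSB levels using convexity properties of the Parisi functional, he shows that the error can be made small for a near-optimal $\mu$.

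A more economical route, which I would try first, is through the Aizenman--Sims--Starr scheme. It gives $\liminf F_N$ as a functional of limiting Gibbs overlap arrays. Under a small perturbation of the Hamiltonian, these arrays satisfy the Ghirlanda--Guerra identities, so by Panchenko's ultrametricity theorem they are generated by Ruelle cascades. The cavity functional then evaluates to $\Par(\mu)$, where $\mu$ is the limiting overlap distribution, and this gives the lower bound.

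Finally, because $\Par$ is continuous and $\Pr([0,1])$ is weakly compact, the infimum is attained. Combining the upper and lower bounds shows that $\lim F_N$ exists and equals $\inf_\mu\Par(\mu)$.
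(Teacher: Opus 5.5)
This matches the paper, which does not reprove the statement but cites Talagrand \cite{Talagrand2006} together with the equivalence between Parisi's recursive formula and the PDE form \cite[Section 1]{ACunique}. Your outline (Cole--Hopf on atomic measures, Guerra's bound extended by the Lipschitz estimate, then Talagrand's or the Aizenman--Sims--Starr/ultrametricity lower bound) is a faithful sketch of that literature, except for one indexing slip: with $\mu=\sum_k(m_k-m_{k-1})\delta_{q_k}$ you have $\mu([0,t])=m_{k-1}$ on $[q_{k-1},q_k)$, so the Cole--Hopf exponent there should be $m_{k-1}$, read as the plain heat semigroup when $m_{k-1}=0$, not $m_k$; as written, for $\mu=\delta_q$ your recursion would replace the heat-semigroup formula \eqref{e:u-soft} on $[0,q]$ by $\log\E\exp(\cdot)$.
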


\subsection{Variational Characterization}
The Parisi PDE is a Hamilton--Jacobi--Bellman equation. Accordingly, its solution $u_\mu(t,x)$ is the value function of a stochastic control problem. For complete details of this equivalence, we refer the reader to \cite{JTDP2016}. For our purposes, we only need the SDE 
\begin{equation}\label{e:AC-SDE}
dX_t^\mu=\beta^2\mu([0,t])u_{\mu,x}(t,X_t^\mu)\,dt+\beta\,dW_t,
\qquad X_0^\mu=h,
\end{equation}
which describes the evolution of the state variable $X_t$ in the stochastic-control representation of the Parisi PDE evaluated under the optimal control. This SDE, and the corresponding variational problem, first appeared in \cite{ACunique}. Because
$u_{\mu,x}$ is bounded and spatially Lipschitz by \Cref{p:PDE-wellposed}, 
standard Lipschitz SDE theory yields a unique strong solution to \eqref{e:AC-SDE} for every
initial condition; see, for example, \cite[Theorem~5.2.1]{Oksendal2003}.  

For all $\mu\in\Pr([0,1])$, we define 
\begin{equation}\label{e:Gmu-def}
G_\mu(t)=\int_t^1 \frac{\beta^2}{2}\Bigl(\E\big[ u_{\mu,x}(s,X_s^\mu)^2\big]-s\Bigr)\,ds,
\qquad t\in[0,1].
\end{equation}
The following proposition characterizing minimizers of the Parisi functional, taken from \cite[Corollary 3.6]{JT2017}, is a critical technical tool for our work.

\begin{proposition}\label{p:JT}
A probability measure $\mu\in\Pr([0,1])$ minimizes $\Par$ if and only if every point of
$\supp\mu$ is a minimizer of $G_\mu$.
\end{proposition}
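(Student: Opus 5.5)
The plan is to combine the strict convexity of $\Par$ with an explicit first-variation formula, namely
\[
\frac{d}{d\varepsilon}\Big|_{\varepsilon=0^+}\Par\big(\mu+\varepsilon(\nu-\mu)\big)=\int_0^1 G_\mu(r)\,(\nu-\mu)(dr),
\qquad \mu,\nu\in\Pr([0,1]).
\]
Convexity would be imported from Auffinger--Chen \cite{ACunique}, who show that $\mu\mapsto u_\mu(0,h)$ is convex (in fact strictly) in the distribution function $\mu([0,t])$. The penalty term $-\frac{\beta^2}{2}\int_0^1 s\,\mu([0,s])\,ds$ is linear in $\mu$, so $\Par$ is convex along the segments $\mu_\varepsilon=\mu+\varepsilon(\nu-\mu)$. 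Given this and the variation formula, $\mu$ is a minimizer if and only if $\int G_\mu\,d\nu\ge\int G_\mu\,d\mu$ for all $\nu$.

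The key step is the derivative of $u_\mu(0,h)$. Set $w_\varepsilon=(u_{\mu_\varepsilon}-u_\mu)/\varepsilon$ and subtract the two PDEs \eqref{e:Parisi-PDE}. Then $w_\varepsilon$ solves a linear terminal-value problem with $w_\varepsilon(1,\cdot)=0$:
\[
\partial_t w+\frac{\beta^2}{2}\Big(w_{xx}+\mu([0,t])\big(u_{\mu_\varepsilon,x}+u_{\mu,x}\big)w_x\Big)+\frac{\beta^2}{2}(\nu-\mu)([0,t])\,u_{\mu_\varepsilon,x}^2=0.
\]
Feynman--Kac then expresses $w_\varepsilon(0,h)$ as the expectation of $\frac{\beta^2}{2}\int_0^1(\nu-\mu)([0,s])\,u_{\mu_\varepsilon,x}^2\,ds$ along the diffusion with drift $\frac{\beta^2}{2}\mu([0,t])(u_{\mu_\varepsilon,x}+u_{\mu,x})$. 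We let $\varepsilon\to0$ using:
\begin{itemize}
\item the uniform bounds of \Cref{p:PDE-wellposed};
\item Lipschitz continuity of $\mu\mapsto u_{\mu,x}$, which follows from the same linearization together with the Gronwall/maximum principle;
\item stability of Lipschitz SDEs.
\end{itemize}
In the limit the drift becomes that of \eqref{e:AC-SDE}, and
\[
\partial_\varepsilon u_{\mu_\varepsilon}(0,h)\big|_{0}=\frac{\beta^2}{2}\int_0^1(\nu-\mu)([0,s])\,\E\big[u_{\mu,x}(s,X^\mu_s)^2\big]\,ds.
\]
Adding the derivative of the penalty term and writing $(\nu-\mu)([0,s])=\int\mathbf 1\{r\le s\}\,(\nu-\mu)(dr)$, Fubini gives exactly $\int G_\mu\,d(\nu-\mu)$, by \eqref{e:Gmu-def}.

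Finally, $G_\mu$ is continuous (indeed Lipschitz) on $[0,1]$, so its minimum $m$ is attained. If $\supp\mu\subset\operatorname{argmin}G_\mu$, then $\int G_\mu\,d\mu=m\le\int G_\mu\,d\nu$ for every $\nu$, so the derivative is nonnegative in every direction and convexity gives minimality. Conversely, suppose some $r_0\in\supp\mu$ has $G_\mu(r_0)>m$. By continuity $G_\mu>m+\delta$ on a neighborhood of $r_0$ that has positive $\mu$-mass, so $\int G_\mu\,d\mu>m$. Taking $\nu=\delta_{r_*}$ at a minimizer $r_*$ then gives a strictly negative derivative, contradicting minimality.

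I expect the main obstacle to be the rigorous justification of the first variation. One must control the linearized PDE uniformly in $\varepsilon$ with only weak regularity in $t$, since $\mu([0,t])$ can jump, and justify Feynman--Kac for weak solutions. My first attempt would be to mollify $\mu$ into measures with smooth distribution functions, prove the formula there using classical solutions, and pass to the limit using the continuity of $\mu\mapsto u_\mu$ in the $L^1$ distance of distribution functions.
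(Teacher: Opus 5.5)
The paper does not prove this proposition itself. It imports it from Jagannath--Tobasco \cite[Corollary 3.6]{JT2017}, and your argument is the same first-order-optimality route by which that result is obtained there: convexity of $\Par$ along segments from \cite{ACunique}, the directional derivative $\int_0^1 G_\mu\,d(\nu-\mu)$ from linearizing the Parisi PDE and applying Feynman--Kac along \eqref{e:AC-SDE}, and the support argument via continuity of $G_\mu$. Your sign and normalization are correct, and the one technical point, justifying the first variation when $t\mapsto\mu([0,t])$ jumps, is the one you flag; the cited works handle it using the regularity theory for $u_\mu$ from \cite{JTDP2016}.
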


\subsection{Proof Overview}

\Cref{p:JT} reduces replica symmetry to a concrete statement about the replica-symmetric candidate $\mu=\delta_q$, where $q$ solves \eqref{e:q-fp}. In particular, it is enough to prove that the associated function $G_{\delta_q}(t)$ is minimized at $t=q$. The proof of \Cref{t:main} then reduces to a one-dimensional problem for the sign of $\E[u_x(t,X_t)^2]-t$, where $u$ is the Parisi PDE solution and $X_t$ is the diffusion from \eqref{e:AC-SDE}. Because $\mu=\delta_q$ has a single atom, the analysis becomes explicit and naturally splits at the interface $t=q$. For $0\le t<q$, the process $u_x(t,X_t)$ is a martingale. A direct computation shows that, on $[0,q]$, the derivative of $t\mapsto \E[u_x(t,X_t)^2]$ is bounded above by $\alpha(\beta,h)$. Since $\E[u_x(q,X_q)^2]=q$, integrating backward from $q$ and using $\alpha(\beta,h)\le 1$ yields
\[
\E[u_x(t,X_t)^2]\ge t \qquad \text{for all } 0\le t\le q,
\]
which is exactly the sign needed in the variational argument.

For $q\le t\le 1$, the explicit identity $u_x(t,x)=\tanh x$ motivates introducing $m_t=\tanh(X_t)$. Then the target inequality becomes $\E[m_t^2]\le t$, and It\^o's formula shows that the drift cancels exactly, so $m_t$ solves the drift-free diffusion
\[
dm_t=\beta(1-m_t^2)\,dW_t.
\]
When $\beta^2(1-q)\le 1$, the inequality $\E[m_t^2]\le t$ follows from a first-contact argument: if $\E[m_t^2]$ were ever to cross above the line $t\mapsto t$, then at the first crossing time the differential inequality would force its slope to be at most $1$, ruling out such a crossing. The remaining regime, where $\beta^2(1-q)>1$, is more delicate. We begin by proving the structural inequality $h<\beta^2 q$. We then derive an explicit formula for the evolution beyond time $q$ by expressing the relevant semigroup through Brownian motion and a $\cosh$ reweighting, leading to a kernel representation for $\E[(1-m_t^2)^2]$ in terms of $S=\sech^2(X_q)$. The key sign comes from a monotone comparison: the kernel is decreasing in $S$, while under $h\le \beta^2 q$ the law of $S$ is tilted toward larger values in a way that makes a covariance inequality applicable. This yields $\E[m_t^2]\le t$ for all $t\ge q$. Consequently, $G_{\delta_q}$ decreases on $[0,q]$ and increases on $[q,1]$, so its minimum is attained at $q$.

\section{Explicit formulas for the RS ansatz $\mu=\delta_q$}

We recall that we have fixed $(\beta,h)$, and we abbreviate 
\[
u=u_{\delta_q},
\qquad
X=X^{\delta_q}.
\]

\subsection{Explicit PDE and SDE}
We begin by writing down the explicit PDE and SDE associated with $u$. 

\begin{proposition}\label[proposition]{prop:explicit-RS}
For $\mu=\delta_q$, we have
\begin{equation}\label{e:ux-hard}
u(t,x)=\frac{\beta^2}{2}(1-t)+\log\cosh x,
\qquad
u_x(t,x)=\tanh x,
\qquad
u_{xx}(t,x)=\sech^2 x,
\qquad q\le t\le 1,
\end{equation}
and
\begin{equation}\label{e:u-soft}
u(t,x)=\frac{\beta^2}{2}(1-q)+\E\big[ \log\cosh(x+\beta\sqrt{q-t}\,Z )\big],
\qquad 0\le t\le q.
\end{equation}
Moreover,
\begin{equation}\label{e:sde-soft-hard}
dX_t=
\begin{cases}
\beta\,dW_t, & 0\le t<q,\\
\beta^2\tanh(X_t)\,dt+\beta\,dW_t, & q\le t\le 1,
\end{cases}
\qquad X_0=h,
\end{equation}
and 
\begin{equation}\label{e:Xq-law}
X_q\stackrel d= h+\beta\sqrt q Z.
\end{equation}
\end{proposition}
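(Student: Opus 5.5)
Since \Cref{p:PDE-wellposed} gives uniqueness of the weak solution to \eqref{e:Parisi-PDE}, the plan is to write down the candidate functions in \eqref{e:ux-hard} and \eqref{e:u-soft}, check that the resulting piecewise function is a (sufficiently regular) solution, and conclude by uniqueness. For $\mu=\delta_q$ we have $\mu([0,t])=0$ for $t<q$ and $\mu([0,t])=1$ for $t\ge q$. The equation therefore splits into a Hamilton--Jacobi--type equation on $[q,1]$ and the backward heat equation on $[0,q]$.

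\textbf{Region $q\le t\le 1$.} Here the PDE reads $\partial_t u+\frac{\beta^2}{2}(u_{xx}+u_x^2)=0$ with $u(1,x)=\log\cosh x$. For $v=\frac{\beta^2}{2}(1-t)+\log\cosh x$ one has $v_x=\tanh x$ and $v_{xx}=\sech^2 x$. Hence $v_{xx}+v_x^2=1$, and $\partial_t v=-\beta^2/2$ cancels it exactly. (Equivalently, the Hopf--Cole transform $e^{u}$ solves the backward heat equation with terminal datum $\cosh x$, which is an eigenfunction of $\partial_{xx}$.) This gives \eqref{e:ux-hard}.

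\textbf{Region $0\le t\le q$.} Here the PDE is $\partial_t u+\frac{\beta^2}{2}u_{xx}=0$ with terminal value $u(q,x)=\frac{\beta^2}{2}(1-q)+\log\cosh x$. The bounded-derivative solution is given by Gaussian convolution, $u(t,x)=\E[u(q,x+\beta\sqrt{q-t}Z)]$, which is \eqref{e:u-soft}. I would verify this either by Feynman--Kac or by differentiating under the expectation, using Gaussian integration by parts to check $\partial_t=-\frac{\beta^2}{2}\partial_{xx}$. The two pieces agree at $t=q$. In each $x$-derivative the glued function is continuous, and its time derivative is bounded and piecewise continuous. It is thus a weak solution, so by the uniqueness in \Cref{p:PDE-wellposed} it equals $u_{\delta_q}$.

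\textbf{The SDE.} Substituting $\mu([0,t])$ into \eqref{e:AC-SDE} gives zero drift for $t<q$ and drift $\beta^2\tanh(X_t)$ for $t\ge q$, using $u_x=\tanh x$ there. This is \eqref{e:sde-soft-hard}. Consequently $X_q=h+\beta W_q\stackrel d=h+\beta\sqrt q Z$, which is \eqref{e:Xq-law}.

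The only step requiring care is the justification that the glued function is the weak solution in the sense of \cite{JTDP2016}. The time derivative jumps at $t=q$, so I would appeal to the weak formulation, which only requires a Lipschitz-in-time solution, rather than to classical regularity.
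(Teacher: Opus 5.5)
Your proposal is correct and follows the paper's proof. You split the PDE according to $\delta_q([0,t])$, verify the explicit $\log\cosh$ solution on $[q,1]$ via $u_{xx}+u_x^2=1$, solve the backward heat equation on $[0,q]$ by Gaussian convolution, and substitute into the SDE. Your explicit appeal to uniqueness of weak solutions, to justify gluing the two pieces across $t=q$ where $\partial_t u$ jumps, makes precise a step the paper leaves implicit.
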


\begin{proof}
Since $\delta_q([0,t])=1$ for $t\ge q$, the PDE \eqref{e:Parisi-PDE} becomes
\[
\partial_t u+\frac{\beta^2}{2}(u_{xx}+u_x^2)=0,
\qquad u(1,x)=\log\cosh x,
\]
on $[q,1]$.  The function in \eqref{e:ux-hard} satisfies the terminal condition and the PDE because
$u_{xx}+u_x^2=\sech^2x+\tanh^2x=1$.

At $t=q$ this gives
\[
u(q,x)=\frac{\beta^2}{2}(1-q)+\log\cosh x.
\]
Since $\delta_q([0,t])=0$ for $t<q$, the PDE reduces on $[0,q]$ to the backward heat equation
\[
\partial_t u+\frac{\beta^2}{2}u_{xx}=0,
\qquad u(q,x)=\frac{\beta^2}{2}(1-q)+\log\cosh x,
\]
and the heat semigroup yields \eqref{e:u-soft}.

For the SDE, \eqref{e:AC-SDE} becomes
\[
dX_t=\beta^2\delta_q([0,t])u_x(t,X_t)\,dt+\beta\,dW_t,
\qquad X_0=h,
\]
which is exactly \eqref{e:sde-soft-hard}; \eqref{e:Xq-law} follows from the driftless segment
$X_t=h+\beta W_t$ on $[0,q]$.
\end{proof}

The explicit formulas in \Cref{prop:explicit-RS} show that $u$ is $C^{1,2}$ on $(0,q)\times\mathbb R$ and on $(q,1)\times\mathbb R$. Moreover, the derivatives used later satisfy the uniform bounds
\[
|u_x|\le 1,\qquad 0\le u_{xx}\le 1,\qquad |u_t|\le \frac{\beta^2}{2},
\]
and $u_x,u_{xx}$ admit continuous one-sided extensions to the interface $t=q$.
Every application of It\^o's formula in the sequel
is made only to these explicit $C^{1,2}$ functions, to bounded stochastic integrals built from
them, or to the smooth function $\tanh$. No later argument applies
It\^o's formula across the interface $t=q$.

\section{The interval  $0\le t<q$}

Define
\[
g(t)=\E\big[ u_x(t,X_t)^2\big], \qquad 0\le t\le q.
\]

\begin{proposition}\label[proposition]{prop:soft-cond}
For $0\le t\le q$,
\begin{equation}\label{e:ux-soft-cond}
u_x(t,X_t)=\E[\tanh(X_q)\mid\mathcal F_t],
\end{equation}
and
\begin{equation}\label{e:uxx-soft-cond}
u_{xx}(t,X_t)=\E[\sech^2(X_q)\mid\mathcal F_t].
\end{equation}
\end{proposition}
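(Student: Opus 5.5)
The plan is to differentiate the explicit heat-semigroup formula \eqref{e:u-soft} in $x$ and then use the Markov property of the driftless segment of \eqref{e:sde-soft-hard}.

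\emph{Step 1: derivatives of $u$ on $[0,q]$.} Differentiating \eqref{e:u-soft} under the expectation is legitimate because $\log\cosh$ has derivatives $\tanh$ and $\sech^2$, both bounded by $1$, so dominated convergence applies. This gives, for $0\le t\le q$,
\[
u_x(t,x)=\E\big[\tanh(x+\beta\sqrt{q-t}\,Z)\big],\qquad u_{xx}(t,x)=\E\big[\sech^2(x+\beta\sqrt{q-t}\,Z)\big],
\]
where $Z\sim\mathcal N(0,1)$. At $t=q$ these formulas reduce to $\tanh x$ and $\sech^2 x$. This matches \eqref{e:ux-hard}, so the two one-sided values agree at the interface.

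\emph{Step 2: conditioning.} On $[0,q]$ we have $X_s=h+\beta W_s$ by \eqref{e:sde-soft-hard}. Hence for $t\le q$,
\[
X_q=X_t+\beta(W_q-W_t).
\]
The increment $W_q-W_t$ is independent of $\mathcal F_t$ and has the law of $\sqrt{q-t}\,Z$, while $X_t$ is $\mathcal F_t$-measurable. The standard freezing lemma for conditional expectations (independent plus measurable component, with a bounded measurable function) then gives
\[
\E[\tanh(X_q)\mid\mathcal F_t]=\phi(X_t),\qquad \phi(x)=\E\big[\tanh(x+\beta\sqrt{q-t}\,Z)\big].
\]
By Step 1, $\phi(x)=u_x(t,x)$, which is \eqref{e:ux-soft-cond}. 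The identical argument with $\sech^2$ in place of $\tanh$ gives \eqref{e:uxx-soft-cond}.

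The only point that needs care is that the weak solution from \Cref{p:PDE-wellposed} coincides with the explicit formula \eqref{e:u-soft}. This is already established in \Cref{prop:explicit-RS}, so we take it as given. Everything else is routine.
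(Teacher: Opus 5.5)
Your proposal is correct and takes the same route as the paper's proof. You differentiate the heat-semigroup formula \eqref{e:u-soft} under the expectation, then condition using $X_q=X_t+\beta(W_q-W_t)$ with the increment independent of $\mathcal F_t$; your appeals to dominated convergence and the freezing lemma simply make explicit the details the paper leaves implicit.
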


\begin{proof}
By \eqref{e:u-soft},
\[
u_x(t,x)=\E\big[\tanh(x+\beta\sqrt{q-t}\,Z)\big],
\qquad
u_{xx}(t,x)=\E\big[\sech^2(x+\beta\sqrt{q-t}\,Z)\big].
\]
On $[0,q]$ we have $X_t=h+\beta W_t$ by \eqref{e:sde-soft-hard}, hence
\[
X_q=X_t+\beta(W_q-W_t).
\]
Conditionally on $\mathcal F_t$, the increment $W_q-W_t$ is independent Gaussian with variance
$q-t$, so the conditional law of $X_q$ is $X_t+\beta\sqrt{q-t}\,Z$.  Plugging this into the explicit
formulas for $u_x$ and $u_{xx}$ gives \eqref{e:ux-soft-cond} and \eqref{e:uxx-soft-cond}.
\end{proof}

\begin{proposition}\label[proposition]{prop:soft-mtg}
The process $M_t=u_x(t,X_t)$ is a martingale on $[0,q]$ and satisfies
\[
dM_t=\beta u_{xx}(t,X_t)\,dW_t.
\]
Further,
\begin{equation}\label{e:gprime-soft}
g'(t)=\beta^2\E\big[ u_{xx}(t,X_t)^2\big]
\le \beta^2\E\big[\sech^4(X_q)\big],
\qquad 0<  t<q.
\end{equation}
\end{proposition}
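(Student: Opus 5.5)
The plan is to read off the martingale property from \Cref{prop:soft-cond}, compute the differential of $M_t$ with It\^o's formula, and then obtain the derivative bound from the It\^o isometry combined with conditional Jensen.

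\emph{Martingale property.} By \eqref{e:ux-soft-cond}, $M_t=\E[\tanh(X_q)\mid\mathcal F_t]$ for $0\le t\le q$. This is a conditional expectation of the bounded random variable $\tanh(X_q)$, so it is a (bounded) martingale on $[0,q]$.

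\emph{The differential.} On $(0,q)\times\R$ the function $u$ is $C^{1,2}$, and $u_x$ is smooth there, since it is a Gaussian convolution of $\tanh$. It also solves the backward heat equation $\partial_t u_x+\frac{\beta^2}{2}u_{xxx}=0$, obtained by differentiating the equation for $u$ in $x$. Since $dX_t=\beta\,dW_t$ on $[0,q)$, It\^o's formula applied to $u_x(t,X_t)$ gives
\[
dM_t=\Bigl(\partial_t u_x+\tfrac{\beta^2}{2}u_{xxx}\Bigr)(t,X_t)\,dt+\beta u_{xx}(t,X_t)\,dW_t=\beta u_{xx}(t,X_t)\,dW_t .
\]
To be careful near the endpoints, I will apply It\^o's formula on $[s,t]\subset(0,q)$. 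The bounds on $u_{xxx}$ come from differentiating the Gaussian kernel, and they are uniform on compact subintervals of $[0,q)$.

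\emph{Formula for $g'$.} The integrand $\beta u_{xx}$ is bounded by $\beta$, so the stochastic integral is a true $L^2$ martingale. The It\^o isometry (equivalently, the fact that $M_t^2-\int_0^t\beta^2u_{xx}^2\,ds$ is a martingale) then gives
\[
g(t)=g(0)+\beta^2\int_0^t\E[u_{xx}(s,X_s)^2]\,ds .
\]
Here $g(0)=M_0^2$ and $M_0=u_x(0,h)$. The integrand $s\mapsto \E[u_{xx}(s,X_s)^2]$ is continuous on $(0,q)$, by bounded convergence together with continuity of $u_{xx}$ and of the paths. The fundamental theorem of calculus then yields $g'(t)=\beta^2\E[u_{xx}(t,X_t)^2]$ for $0<t<q$.

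\emph{The inequality.} By \eqref{e:uxx-soft-cond}, $u_{xx}(t,X_t)=\E[\sech^2(X_q)\mid\mathcal F_t]$. Conditional Jensen then gives $u_{xx}(t,X_t)^2\le \E[\sech^4(X_q)\mid\mathcal F_t]$, and taking expectations yields the claimed bound.

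The only mildly delicate step is justifying It\^o's formula for $u_x$, which requires third $x$-derivatives. Alternatively, the martingale representation theorem gives $dM_t=H_t\,dW_t$ for some integrand $H_t$. The explicit formula $M_t=\E[\tanh(X_t+\beta\sqrt{q-t}Z)]$ with $X_t=h+\beta W_t$ is smooth in $(t,W_t)$ for $t<q$, which identifies $H_t=\beta\,\partial_x u_x=\beta u_{xx}$. Either route is routine, since all derivatives are bounded for $t\le q-\epsilon$.
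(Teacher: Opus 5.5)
Your proposal is correct and follows essentially the same route as the paper: It\^o's formula for $u_x(t,X_t)$ via the backward heat equation, the $L^2$ computation giving $g(t)=g(0)+\beta^2\int_0^t\E[u_{xx}(s,X_s)^2]\,ds$, continuity of the integrand, and conditional Jensen applied to \eqref{e:uxx-soft-cond}. The differences are cosmetic: you read the martingale property off \eqref{e:ux-soft-cond} rather than from boundedness of $\beta u_{xx}$, and your endpoint caution is unnecessary, since $u_{xxx}(t,x)=\E[\tanh''(x+\beta\sqrt{q-t}\,Z)]$ is bounded uniformly on $[0,q]\times\R$.
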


\begin{proof}
On $[0,q]$, $u$ solves the backward heat equation, so $v=u_x$ satisfies
\[
\partial_t v+\frac{\beta^2}{2}v_{xx}=0.
\]
Applying It\^o to $v(t,X_t)$ with $dX_t=\beta\,dW_t$ gives
\[
dM_t=\Bigl(v_t+\frac{\beta^2}{2}v_{xx}\Bigr)(t,X_t)\, dt+\beta v_x(t,X_t)\, dW_t
=\beta u_{xx}(t,X_t)\,dW_t.
\]
Since $|u_{xx}|\le 1$ by \Cref{p:PDE-wellposed}, the stochastic integral above is square-integrable, so $M$ is a true
martingale.  Applying It\^o to $M_t^2$ and taking expectations yields
\[
g(t)=g(0)+\beta^2\int_0^t \E\big[u_{xx}(s,X_s)^2\big] \,ds.
\]
The integrand is bounded and continuous in $s$ by the explicit formula for $u_{xx}$ on $[0,q]$,
so $g\in C^1([0,q])$ and
\[
g'(t)=\beta^2\E\big[u_{xx}(t,X_t)^2\big].
\]
Now use \eqref{e:uxx-soft-cond} and conditional Jensen to obtain
\[
u_{xx}(t,X_t)^2
\le \E[\sech^4(X_q)\mid\mathcal F_t].
\]
Taking expectations yields \eqref{e:gprime-soft}.
\end{proof}

\begin{corollary}\label{cor:left-sign}
For every $0\le t<q$,
\begin{equation}\label{e:left-sign}
\E\big[ u_x(t,X_t)^2\big] -t\ge (1-\alpha)(q-t)\ge 0.
\end{equation}
In particular,
\[
\E\big[u_x(t,X_t)^2\big]\ge t \qquad (0\le t<q).
\]
If $\alpha<1$, then the inequality is strict for every $0\le t<q$.
\end{corollary}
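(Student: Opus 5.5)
The plan is to integrate the derivative bound of \Cref{prop:soft-mtg} backward from the interface $t=q$. First I will identify the endpoint value $g(q)$. By \eqref{e:ux-soft-cond} at $t=q$, or directly from \eqref{e:ux-hard}, we have $u_x(q,X_q)=\tanh(X_q)$. Then \eqref{e:Xq-law} and the fixed-point equation \eqref{e:q-fp} give
\[
g(q)=\E\big[\tanh^2(h+\beta\sqrt q Z)\big]=q .
\]
Here I should make sure that $g$ is continuous up to $t=q$, so that the value $g(q)$ is the one entering the integration. This follows from the martingale representation $g(t)=g(0)+\beta^2\int_0^t\E[u_{xx}(s,X_s)^2]\,ds$ in the proof of \Cref{prop:soft-mtg}, which is valid on all of $[0,q]$ because $M$ is a square-integrable martingale on the closed interval.

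Next, by the same identity and the bound \eqref{e:gprime-soft}, for $0\le t<q$ we have
\[
q-g(t)=g(q)-g(t)=\beta^2\int_t^q \E\big[u_{xx}(s,X_s)^2\big]\,ds\le (q-t)\,\beta^2\E\big[\sech^4(X_q)\big].
\]
By \eqref{e:Xq-law} and the definition \eqref{e:alpha-def}, the last factor equals $\alpha$. Hence $g(t)\ge q-\alpha(q-t)$, which rearranges to $g(t)-t\ge (1-\alpha)(q-t)$. The hypothesis $\alpha\le 1$ of \Cref{t:main} makes the right-hand side nonnegative, giving \eqref{e:left-sign} and the stated consequence $\E[u_x(t,X_t)^2]\ge t$. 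If $\alpha<1$ and $t<q$, then $(1-\alpha)(q-t)>0$, so the inequality is strict.

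There is no real obstacle here. The only point needing care is the endpoint continuity at $t=q$, where the explicit formulas change. This is handled by using the integrated form of the martingale identity rather than the pointwise derivative on the open interval. The same route also covers the degenerate case $q=0$, where the statement is vacuous.
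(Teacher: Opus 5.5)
Your proposal is correct and follows essentially the same route as the paper: you identify $g(q)=q$ from $u_x(q,x)=\tanh x$ and the fixed-point equation, then integrate the bound \eqref{e:gprime-soft} from $t$ to $q$ and rearrange. The endpoint continuity you take care over is also covered in the paper, by the assertion $g\in C^1([0,q])$ in the proof of \Cref{prop:soft-mtg}.
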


\begin{proof}
At $t=q$, \eqref{e:ux-hard} gives $u_x(q,x)=\tanh x$, hence
\[
g(q)=\E\big[ \tanh^2(X_q)\big]=q
\]
by the fixed-point equation \eqref{e:q-fp}.  Integrating \eqref{e:gprime-soft} from $t$ to $q$,
\[
q-g(t)=g(q)-g(t)=\int_t^q g'(s)\,ds\le \alpha(q-t).
\]
Rearranging yields \eqref{e:left-sign}.
\end{proof}

\section{The interval  $q\le t\le 1$}

Set
\[
m_t=\tanh(X_t), \qquad q\le t\le 1.
\]

\subsection{The diffusion for $m_t$}

We begin by deriving basic properties of $m_t$. 

\begin{proposition}\label[proposition]{prop:m-diffusion}
On $[q,1]$, 
\begin{equation}\label{e:m-sde}
dm_t=\beta(1-m_t^2)\,dW_t.
\end{equation}
Further, if
\begin{equation}\label{e:f-def}
f(t)=\E[m_t^2]-t,
\end{equation}
then
\begin{equation}\label{e:fprime}
f'(t)=\beta^2\E\big[(1-m_t^2)^2\big]-1,
\qquad
f(q)=0,
\qquad
f'(q)=\alpha-1,
\end{equation}
where $f'(q)$ is understood as a one-sided derivative.
\end{proposition}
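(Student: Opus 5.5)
The plan is to apply It\^o's formula to the smooth function $\tanh$ along the SDE \eqref{e:sde-soft-hard} on $[q,1]$, where $dX_t=\beta^2\tanh(X_t)\,dt+\beta\,dW_t$. Using $\tanh'=\sech^2=1-\tanh^2$ and $\tanh''=-2\tanh\sech^2$, It\^o gives
\[
dm_t=\sech^2(X_t)\bigl(\beta^2\tanh(X_t)\,dt+\beta\,dW_t\bigr)+\frac{\beta^2}{2}\bigl(-2\tanh(X_t)\sech^2(X_t)\bigr)\,dt .
\]
The two drift terms cancel exactly, so $dm_t=\beta(1-m_t^2)\,dW_t$, which is \eqref{e:m-sde}. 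Since $|1-m_t^2|\le 1$, the stochastic integral is square-integrable, and $m$ is a true martingale on $[q,1]$.

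Next I apply It\^o to $m_t^2$:
\[
d(m_t^2)=2m_t\,dm_t+\beta^2(1-m_t^2)^2\,dt .
\]
The $dW$ term $2\beta m_t(1-m_t^2)\,dW_t$ has a bounded integrand, so it has mean zero. Taking expectations gives
\[
\E[m_t^2]=\E[m_q^2]+\beta^2\int_q^t\E\big[(1-m_s^2)^2\big]\,ds .
\]
The integrand is bounded by $1$. It is also continuous in $s$, because $m$ has continuous paths and bounded convergence applies. Hence $f$ is $C^1$ on $[q,1]$, with one-sided derivative at $q$, and $f'(t)=\beta^2\E[(1-m_t^2)^2]-1$.

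For the boundary values, \eqref{e:Xq-law} gives $m_q=\tanh(X_q)$ with $X_q\stackrel d= h+\beta\sqrt q Z$. The fixed-point equation \eqref{e:q-fp} then yields $\E[m_q^2]=q$, so $f(q)=0$. At $t=q$ we also have $(1-m_q^2)^2=\sech^4(X_q)$, so $f'(q)=\beta^2\E[\sech^4(h+\beta\sqrt qZ)]-1=\alpha-1$ by \eqref{e:alpha-def}.

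There is no real obstacle here. The only points needing care are the justification that the local martingale terms are true martingales (boundedness handles this) and the continuity of $s\mapsto\E[(1-m_s^2)^2]$, which is needed for differentiability up to the endpoint $q$. It\^o is applied only on $[q,1]$, where the drift is explicit and smooth, so the interface at $t=q$ is never crossed.
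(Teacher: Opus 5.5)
Your proposal is correct and follows essentially the same route as the paper: Itô's formula for $\tanh$ along the SDE on $[q,1]$, where the drift cancels, then Itô for $m_t^2$ and taking expectations with a bounded continuous integrand to get $f\in C^1$, with $f(q)=0$ and $f'(q)=\alpha-1$ read off from \eqref{e:Xq-law}, \eqref{e:q-fp} and \eqref{e:alpha-def}. Your added remarks — boundedness making the stochastic integrals true martingales, and path continuity plus bounded convergence giving continuity of $s\mapsto\E[(1-m_s^2)^2]$ — just spell out steps the paper states briefly.
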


\begin{proof}
We apply It\^o's formula to $m_t=\tanh(X_t)$ using the SDE in \eqref{e:sde-soft-hard}. 
Since $(\tanh)'=\sech^2$ and $(\tanh)''=-2\tanh\,\sech^2$, the drift cancels and one obtains
\eqref{e:m-sde}.  Applying It\^o to $m_t^2$ then gives
\[
d(m_t^2)=2\beta m_t(1-m_t^2)\,dW_t+\beta^2(1-m_t^2)^2\,dt.
\]
Taking expectations,
\[
\E[m_t^2]=\E[m_q^2]+\beta^2\int_q^t \E\big[ (1-m_s^2)^2\big] \,ds.
\]
Since the integrand is bounded and continuous in $s$, the map $t\mapsto \E[m_t^2]$ is $C^1$ and
\[
\frac{d}{dt}\E[m_t^2]=\beta^2\E(1-m_t^2)^2,
\]
which implies \eqref{e:fprime}.  At $t=q$, $m_q=\tanh(X_q)$ and \eqref{e:Xq-law} give
\[
\E[m_q^2]=\E\big[\tanh^2(\beta\sqrt q Z+h)\big]=q,
\]
and
\[
f'(q)=\beta^2\E\big[\sech^4(\beta\sqrt q Z+h)\big]-1=\alpha-1.
\]
\end{proof}

We will use the shorthand
\[
S=\sech^2(X_q)=1-m_q^2.
\]

\begin{lemma}\label{lem:alpha-gap}
We have
\begin{equation}\label{e:moment-gap}
\E[S^2]<\E[S]=1-q.
\end{equation}
In particular, if $\alpha=1$, then
\begin{equation}\label{e:alphaeq-implies-Bbig}
\beta^2(1-q)>1.
\end{equation}
\end{lemma}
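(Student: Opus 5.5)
The moment gap \eqref{e:moment-gap} is elementary, and \eqref{e:alphaeq-implies-Bbig} will follow from it by combining with $\alpha=1$.

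\textbf{The identity $\E[S]=1-q$.} Since $S=\sech^2(X_q)=1-\tanh^2(X_q)$, this follows from \eqref{e:Xq-law} and the fixed-point equation \eqref{e:q-fp}:
\[
\E[S]=1-\E[\tanh^2(\beta\sqrt q Z+h)]=1-q.
\]

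\textbf{The strict inequality.} Pointwise we have $0<S\le 1$, so $S^2\le S$, with equality only when $S=1$. The event $\{S=1\}$ is the event $\{X_q=0\}$. By \eqref{e:Xq-law}, $X_q$ is Gaussian with variance $\beta^2 q$, so we need $q>0$ to conclude that this event has probability zero.

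To see $q>0$, argue from \eqref{e:q-fp}. If $q=0$, the right side equals $\tanh^2 h>0$ because $h>0$, which is a contradiction. So $q>0$, and $X_q$ has a nondegenerate Gaussian law with no atom at $0$. Hence $\Pp(S<1)=1$, so $S-S^2>0$ almost surely, and therefore $\E[S^2]<\E[S]$.

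\textbf{The consequence \eqref{e:alphaeq-implies-Bbig}.} Again by \eqref{e:Xq-law}, $\alpha=\beta^2\E[\sech^4(X_q)]=\beta^2\E[S^2]$. If $\alpha=1$, then
\[
1=\beta^2\E[S^2]<\beta^2\E[S]=\beta^2(1-q),
\]
which is the claim.

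The only point needing any care is the nondegeneracy of $X_q$, which requires checking $q>0$ from $h>0$. Everything else is immediate.
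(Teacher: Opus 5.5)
Your proof is correct and follows essentially the same route as the paper: $0<S<1$ almost surely because $X_q$ has no atom at $0$, so $S^2<S$ gives $\E[S^2]<\E[S]=1-q$ by the fixed-point equation, and $\alpha=\beta^2\E[S^2]$ then yields the consequence. The only difference is that you verify $q>0$ explicitly, which the paper leaves implicit; strictly speaking this check is unnecessary, since if $q=0$ then $X_q=h>0$ deterministically and $\Pp(X_q=0)=0$ holds anyway.
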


\begin{proof}
By \eqref{e:Xq-law}, one has $X_q\stackrel d=h+\beta\sqrt q Z$, hence $\Pp(X_q=0)=0$.  Since
$\sech^2 x\in(0,1]$ for all $x$ and equals $1$ only at $x=0$, it follows that 
$0<S<1$ almost surely. Therefore $S^2<S$ almost surely, so $\E[S^2]<\E[S]$.  Finally,
\[
1-q=\E\big[ \sech^2(X_q)\big]=\E[S]
\]
by \eqref{e:q-fp} and the identity $\tanh^2+\sech^2=1$, which proves \eqref{e:moment-gap}.
If $\alpha=1$, then 
\[
\frac{1}{\beta^2}=\E[S^2]<\E[S]=1-q,
\]
so \eqref{e:alphaeq-implies-Bbig} follows.
\end{proof}

\subsection{Case I:  $\beta^2(1-q)\le 1$}

\begin{proposition}\label[proposition]{prop:hard-small}
If
\begin{equation}\label{e:Bsmall}
\beta^2(1-q)\le 1,
\end{equation}
then  $\E[m_t^2]\le t$ for all $t\in[q,1]$.
\end{proposition}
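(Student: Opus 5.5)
Set $f(t)=\E[m_t^2]-t$ as in \eqref{e:f-def}; the goal is $f\le 0$ on $[q,1]$, and the plan is a first-crossing argument. By \Cref{prop:m-diffusion}, $f(q)=0$, $f\in C^1([q,1])$, and $f'(t)=\beta^2\E[(1-m_t^2)^2]-1$. The key algebraic observation is that $(1-m_t^2)^2\le 1-m_t^2$, since $0\le 1-m_t^2\le 1$. Hence
\[
f'(t)\le \beta^2\big(1-\E[m_t^2]\big)-1=\beta^2\big(1-t-f(t)\big)-1 .
\]
This gives a linear differential inequality for $f$ alone.

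Next use \eqref{e:Bsmall}. For $t\ge q$ we have $\beta^2(1-t)-1\le \beta^2(1-q)-1\le 0$, so
\[
f'(t)\le -\beta^2 f(t)+\big(\beta^2(1-t)-1\big)\le -\beta^2 f(t).
\]
A Grönwall step then closes the argument. Set $\phi(t)=e^{\beta^2(t-q)}f(t)$. Then $\phi'(t)=e^{\beta^2(t-q)}\big(f'(t)+\beta^2 f(t)\big)\le 0$, so $\phi$ is nonincreasing on $[q,1]$. Since $\phi(q)=f(q)=0$, we get $\phi\le 0$, hence $f\le 0$, which is $\E[m_t^2]\le t$ for all $t\in[q,1]$.

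Equivalently, one can phrase this as the first-contact argument from the overview. If $f$ became positive, take the last zero $t_0$ before a point where $f>0$. On the interval where $f>0$, the inequality gives $f'<0$, which contradicts $f$ increasing from $0$.

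There is no serious obstacle here. The only points to check are the pointwise bound $(1-m^2)^2\le 1-m^2$ and the $C^1$ regularity of $f$ up to $t=q$, which \Cref{prop:m-diffusion} already provides. Neither the hypothesis $\alpha\le1$ nor \Cref{lem:alpha-gap} is needed in this case.
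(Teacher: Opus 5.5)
Your proof is correct, but you close the argument differently from the paper. Both proofs rest on the same pointwise inequality $m^4\le m^2$, i.e.\ $(1-m^2)^2\le 1-m^2$.

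The paper applies this inequality only on the contact set: at a zero $t_0$ of $f$ it gives $f'(t_0)\le\beta^2(1-t_0)-1$. Having dropped the $-\beta^2 f$ term this way, the paper's first-contact argument needs strictness at the start. It gets this from the moment gap in Lemma~\ref{lem:alpha-gap}, namely $f'(q)=\beta^2\E[S^2]-1<\beta^2\E[S]-1\le 0$. This ensures the first time $a$ with $f(a)\ge 0$ satisfies $a>q$, and then $f'(a)\le\beta^2(1-a)-1<0$ gives the contradiction.

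You instead keep the inequality on all of $[q,1]$ as $f'\le-\beta^2 f+(\beta^2(1-t)-1)\le-\beta^2 f$. The integrating factor $e^{\beta^2(t-q)}$ then finishes in one line, with no need for $f'(q)<0$ or Lemma~\ref{lem:alpha-gap}. Your last paragraph also correctly observes that the contact argument goes through this way, since $f'<0$ wherever $f>0$.

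Your remark that $\alpha\le1$ is not needed is right, and the paper's Case~I does not use it either. There $\alpha=\beta^2\E[S^2]<\beta^2(1-q)\le1$ holds automatically.

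What the paper's barrier-style argument buys is that it only needs the derivative bound on $\{f=0\}$, which would matter if the global bound were unavailable. Here the global bound comes for free, so your version is the shorter and cleaner one.
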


\begin{proof}
Let $f$ be defined by \eqref{e:f-def}.  Since $\beta>0$, \Cref{lem:alpha-gap} and \eqref{e:Bsmall} give
\[
f'(q)=\beta^2\E[S^2]-1<\beta^2\E[S]-1=\beta^2(1-q)-1\le 0.
\]
Thus 
\begin{equation}\label{e:fprimeq-negative}
f'(q)<0.
\end{equation}
If $f(t_0)=0$ at some $t_0\in[q,1]$, then
$\E[m_{t_0}^2]=t_0$.  Since $m_{t_0}^2\in[0,1]$, one has $(m_{t_0}^2)^2\le m_{t_0}^2$, and therefore
\[
\E\big[ (1-m_{t_0}^2)^2\big]=1-2\E[m_{t_0}^2]+\E[m_{t_0}^4]
\le 1-\E[m_{t_0}^2]=1-t_0.
\]
Thus, by \eqref{e:fprime},
\begin{equation}\label{e:zero-der-bound}
f'(t_0)\le \beta^2(1-t_0)-1.
\end{equation}

Suppose towards contradiction that $f(t)>0$ for some $t>q$.  Since $f(q)=0$ (by \eqref{e:q-fp}) and $f'(q)<0$, there exists 
$\varepsilon>0$ such that $f<0$ on $(q,q+\varepsilon]$.  Define
\[
a=\inf\{t>q:\ f(t)\ge 0\}.
\]
Then $a>q$, continuity gives $f(a)=0$, and there is a decreasing sequence $(t_n)$ such that $t_n \rightarrow  a$ with $f(t_n)\ge 0$.
Hence
\[
\frac{f(t_n)-f(a)}{t_n-a}\ge 0,
\]
so $f'(a)\ge 0$.  On the other hand, \eqref{e:zero-der-bound} and $a>q$ give
\[
f'(a)\le \beta^2(1-a)-1<\beta^2(1-q)-1\le 0,
\]
a contradiction.  Thus $f(t)\le 0$ for all $t\in[q,1]$.
\end{proof}

\subsection{Case II: $\beta^2(1-q)>1$}

Set
\begin{equation}\label{e:sigma-def}
\sigma^2=\beta^2 q.
\end{equation}
Because $h>0$, the fixed-point equation \eqref{e:q-fp} rules out $q=0$; hence $\sigma>0$.

\begin{lemma}\label{lem:Msigma-dec}
For fixed $\sigma\ge 0$, the function
\[
M_\sigma(h)=\E\big[\sech^2(h+\sigma Z)\big], \qquad h\ge 0,
\]
is strictly decreasing.
\end{lemma}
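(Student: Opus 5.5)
We prove the claim by differentiating under the expectation and exploiting the symmetry of $Z$. Set $\phi(x)=\sech^2 x$. Then $\phi'(x)=-2\tanh x\,\sech^2 x$, which is bounded, so dominated convergence gives
\[
M_\sigma'(h)=\E\big[\phi'(h+\sigma Z)\big]=-2\,\E\big[\tanh(h+\sigma Z)\sech^2(h+\sigma Z)\big].
\]
Hence it suffices to show that $\E[\psi(h+\sigma Z)]>0$ for every $h>0$, where $\psi(x)=\tanh x\,\sech^2 x$. This gives $M_\sigma'(h)<0$ on $(0,\infty)$. Together with continuity of $M_\sigma$ at $h=0$, it yields strict decrease on $[0,\infty)$.

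The degenerate case $\sigma=0$ is immediate: $\psi(h)>0$ for $h>0$, and $\sech^2$ is strictly decreasing on $[0,\infty)$.

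For $\sigma>0$, write the expectation as an integral against the Gaussian density centered at $h$:
\[
\E[\psi(h+\sigma Z)]=\int_{\R}\psi(x)\,\frac{1}{\sigma}\varphi\Big(\frac{x-h}{\sigma}\Big)\,dx,
\]
where $\varphi$ is the standard normal density. Since $\psi$ is odd, we pair $x$ with $-x$ and restrict to $x>0$:
\[
\E[\psi(h+\sigma Z)]=\int_0^\infty \psi(x)\,\frac{1}{\sigma}\Big[\varphi\Big(\frac{x-h}{\sigma}\Big)-\varphi\Big(\frac{x+h}{\sigma}\Big)\Big]\,dx.
\]
For $x>0$ and $h>0$ we have $|x-h|<x+h$, so the bracket is strictly positive. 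Also $\psi(x)>0$ for $x>0$. The integrand is therefore strictly positive on $(0,\infty)$, and so the integral is strictly positive.

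The only step needing care is justifying the interchange of derivative and expectation, which follows from boundedness of $\phi'$. The symmetrization argument is the crux, and it carries no real obstacle. An equivalent route is Gaussian integration by parts, which relates $M_\sigma'$ to $\E[\tanh]$-type quantities, but the odd/even pairing above is the cleanest.
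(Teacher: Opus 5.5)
Your proof is correct. It reaches the conclusion by a different decomposition than the paper, although the two arguments rest on the same inequality. The paper never differentiates $\sech^2$. It writes $g=\sech^2$ in layer-cake form as $g(x)=\int_0^1 \mathbf 1_{\{|x|<a(u)\}}\,du$, so that $M_\sigma(h)=\int_0^1 I_{a(u)}(h)\,du$ with $I_a(h)=\Pp(|h+\sigma Z|<a)$. It then shows that each $I_a$ is strictly decreasing, because $I_a'(h)=\phi_\sigma(h+a)-\phi_\sigma(h-a)<0$; here $\phi_\sigma(y)=\sigma^{-1}\varphi(y/\sigma)$ in your notation. You instead put the derivative on the test function, and use that $g'$ is odd and negative on $(0,\infty)$ to pair $x$ with $-x$. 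Substituting $u=g(a)$ in the paper's $\int_0^1 I_{a(u)}'(h)\,du$ gives
\[
M_\sigma'(h)=-\int_0^\infty \bigl(-g'(a)\bigr)\bigl[\phi_\sigma(a-h)-\phi_\sigma(a+h)\bigr]\,da,
\]
which is exactly your formula. In both versions the decisive fact is $|a-h|<a+h$ for $a,h>0$. The paper's version uses only that $g$ is even and strictly decreasing in $|x|$. It therefore needs no smoothness of $g$ and no interchange of derivative and expectation: one can integrate the strict inequalities $I_a(h_1)>I_a(h_2)$ over $u$ directly. Your version is more direct for this smooth $g$. Its only extra cost is the dominated-convergence step, which you justify correctly via boundedness of $g'$. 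Your mean value theorem argument on $[0,h]$ also correctly extends strict decrease to include $h=0$.
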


\begin{proof}
If $\sigma=0$, then $M_0(h)=\sech^2 h$ is strictly decreasing.  Assume $\sigma>0$ and write
$g(x)=\sech^2 x$.  Then $M_\sigma=\phi_\sigma*g$, where $\phi_\sigma$ is the centered Gaussian density
with variance $\sigma^2$.  Both $\phi_\sigma$ and $g$ are even and strictly decreasing on $[0,\infty)$.

By the layer-cake representation, for all $u\in(0,1)$ there exists $a(u)>0$ such that
\[\{x:g(x)>u\}=(-a(u),a(u)),\] and hence
\[
g(x)=\int_0^1 \mathbf 1_{\{|x|<a(u)\}}\,du.
\]
Therefore
\[
M_\sigma(h)=\int_0^1 I_{a(u)}(h)\,du,
\qquad
I_a(h)=\int_{h-a}^{h+a}\phi_\sigma(y)\,dy.
\]
For $h>0$,
\[
I_a'(h)=\phi_\sigma(h+a)-\phi_\sigma(h-a)<0,
\]
because $|h+a|>|h-a|$ and $\phi_\sigma$ is even and strictly decreasing in $|y|$.  Integrating over $u$
shows that $M_\sigma$ is strictly decreasing.
\end{proof}

\begin{lemma}\label{lem:mmse}
For every $\sigma\ge 0$,
\begin{equation}\label{e:mmse-bound}
\E\big[\sech^2(\sigma^2+\sigma Z)\big]\le \frac{1}{1+\sigma^2}.
\end{equation}
\end{lemma}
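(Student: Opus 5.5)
The plan is to read \eqref{e:mmse-bound} as an estimation inequality: the left side is the minimum mean-square error for recovering a symmetric sign from a Gaussian observation, and the right side is the error of the best linear estimator, so the bound is just the optimality of the conditional mean.

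Let $X$ be uniform on $\{-1,1\}$, independent of $Z\sim\mathcal N(0,1)$, and set $Y=\sigma X+Z$. The case $\sigma=0$ is trivial, since both sides of \eqref{e:mmse-bound} equal $1$, so assume $\sigma>0$.

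First, compute the posterior mean by Bayes' rule. We have
\[
\Pp(X=\pm1\mid Y=y)\propto e^{-(y\mp\sigma)^2/2}\propto e^{\pm\sigma y},
\qquad\text{so}\qquad \E[X\mid Y]=\tanh(\sigma Y).
\]

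Second, identify the MMSE with the left side of \eqref{e:mmse-bound}. By orthogonality of the conditional mean and $X^2=1$,
\[
\E\big[(X-\tanh(\sigma Y))^2\big]=1-\E\big[\tanh^2(\sigma Y)\big].
\]
Conditionally on $X=1$, $\sigma Y$ has the law of $\sigma^2+\sigma Z$. Conditionally on $X=-1$, it has the law of $-(\sigma^2+\sigma Z)$, using the symmetry of $Z$. Since $\tanh^2$ is even, $\E[\tanh^2(\sigma Y)]=\E[\tanh^2(\sigma^2+\sigma Z)]$. The identity $1-\tanh^2=\sech^2$ then shows that the MMSE equals $\E[\sech^2(\sigma^2+\sigma Z)]$.

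Third, compare with the linear estimator $cY$, where $c=\sigma/(1+\sigma^2)$. Using $\E[XY]=\sigma$ and $\E[Y^2]=1+\sigma^2$,
\[
\E[(X-cY)^2]=1-2c\sigma+c^2(1+\sigma^2)=\frac{1}{1+\sigma^2}.
\]
Because the conditional mean minimizes mean-square error over all measurable functions of $Y$, the MMSE is at most this value, which proves \eqref{e:mmse-bound}.

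No step is a real obstacle. The only points needing care are the sign-symmetry reduction in the second step and the posterior computation in the first, and both are routine.
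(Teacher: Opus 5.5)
Your proof is correct and is essentially the paper's argument. The paper likewise identifies the left side as the MMSE for a symmetric sign observed through $Y=\sigma S+Z$, using the posterior mean $\tanh(\sigma Y)$ and the evenness of $\sech^2$, and then bounds it by the error $1/(1+\sigma^2)$ of the optimal linear estimator.
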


\begin{proof}
Let $S\in\{\pm1\}$ be symmetric and independent of $Z\sim N(0,1)$, and set $
Y=\sigma S+Z$. 
Bayes's rule gives
\[
\E[S\mid Y=y]=\tanh(\sigma y).
\]
Hence the minimum mean-squared estimation error (MMSE) is
\[
\operatorname{mmse}(\sigma^2)
=\E\bigl[(S-\E[S\mid Y])^2\bigr]
=1-\E\big[ \tanh^2(\sigma Y)\big]
=\E\big[ \sech^2(\sigma Y)\big].
\]
Conditioning on $S$ and using the evenness of $\sech^2$,
\[
\operatorname{mmse}(\sigma^2)=\E\big[\sech^2(\sigma^2+\sigma Z)\big].
\]
Now compare with the linear estimator $aY$.  Its mean-square error is
\[
\E[(S-aY)^2]=1-2a\E[SY]+a^2\E[Y^2]
=1-2a\sigma+a^2(1+\sigma^2).
\]
Minimizing over $a$ gives the value $(1+\sigma^2)^{-1}$.  Since MMSE is no larger than the error of a
specific estimator, \eqref{e:mmse-bound} follows.
\end{proof}

\begin{proposition}\label[proposition]{prop:h-less-beta2q}
If
\begin{equation}\label{e:Bbig}
\beta^2(1-q)>1,
\end{equation}
then
\begin{equation}\label{e:h-less-sigma2}
h<\sigma^2=\beta^2 q.
\end{equation}
\end{proposition}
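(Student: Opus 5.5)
Argue by contradiction, combining the monotonicity in \Cref{lem:Msigma-dec} with the MMSE bound in \Cref{lem:mmse}.

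Suppose $h\ge \sigma^2=\beta^2 q$. By \eqref{e:q-fp} and the identity $\tanh^2+\sech^2=1$,
\[
1-q=\E\big[\sech^2(h+\sigma Z)\big]=M_\sigma(h).
\]
Since $h\ge\sigma^2\ge 0$ and $M_\sigma$ is (weakly) decreasing on $[0,\infty)$ by \Cref{lem:Msigma-dec}, we get
\[
1-q=M_\sigma(h)\le M_\sigma(\sigma^2).
\]
\Cref{lem:mmse} then gives
\[
M_\sigma(\sigma^2)=\E\big[\sech^2(\sigma^2+\sigma Z)\big]\le \frac{1}{1+\sigma^2}=\frac{1}{1+\beta^2 q}.
\]

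Combining, $1-q\le (1+\beta^2 q)^{-1}$, that is, $(1-q)(1+\beta^2 q)\le 1$. Expanding,
\[
1+\beta^2 q-q-\beta^2q^2\le 1,
\]
which simplifies to $q\big(\beta^2(1-q)-1\big)\le 0$. We have $q>0$ because $h>0$ rules out $q=0$ in \eqref{e:q-fp}, as already noted. Hence $\beta^2(1-q)\le 1$, contradicting \eqref{e:Bbig}. Therefore $h<\sigma^2$.

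The only points needing care are the case $h=\sigma^2$, which is covered since monotonicity is used only in its weak form, and that \Cref{lem:Msigma-dec} requires $h\ge 0$, which holds. No step here is a real obstacle. The substantive input is recognizing $M_\sigma(\sigma^2)$ as the MMSE of the binary Gaussian channel, and that is already supplied by \Cref{lem:mmse}.
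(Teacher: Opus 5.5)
Your proposal is correct and follows essentially the same route as the paper's proof. Both argue by contradiction from $h\ge\sigma^2$, use \Cref{lem:Msigma-dec} to bound $1-q\le \E[\sech^2(\sigma^2+\sigma Z)]$ and \Cref{lem:mmse} to bound this by $(1+\beta^2 q)^{-1}$, and then use $q>0$ in the resulting inequality $q\bigl(\beta^2(1-q)-1\bigr)\le 0$ to contradict $\beta^2(1-q)>1$.
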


\begin{proof}
Assume towards contradiction that $h\ge \sigma^2$.  Since $h>0$, \eqref{e:q-fp} implies $q>0$.
By Lemma~\ref{lem:Msigma-dec},
\[
1-q=\E\big[\sech^2(h+\sigma Z)\big]\le \E\big[\sech^2(\sigma^2+\sigma Z)\big].
\]
By Lemma~\ref{lem:mmse},
\[
1-q\le \frac{1}{1+\sigma^2}=\frac{1}{1+\beta^2 q}.
\]
Multiplying by $1+\beta^2 q$ gives
\[
(1-q)(1+\beta^2 q)\le 1,
\]
that is,
\[
q\bigl(\beta^2(1-q)-1\bigr)\le 0.
\]
Since $q>0$, this yields $\beta^2(1-q)\le 1$, contradicting \eqref{e:Bbig}.  Hence
\eqref{e:h-less-sigma2} must hold.
\end{proof}

We now study the interval $q \le t \le 1$ under the condition $h\le \sigma^2$.  Combined with
Proposition~\ref{prop:h-less-beta2q}, this will settle the whole window.

\begin{proposition}\label[proposition]{prop:Doob}
Let $t=q+\lambda/\beta^2$ with $0\le \lambda\le \beta^2(1-q)$, and fix $x\in\R$.  For every bounded measurable
$\psi:\R\to\R$,
\begin{equation}\label{e:Doob}
\E\bigl[\psi(X_t)\mid X_q=x\bigr]
=
\frac{\E\bigl[\psi(x+\sqrt\lambda\,G)\cosh(x+\sqrt\lambda\,G)\bigr]}
{\E\bigl[\cosh(x+\sqrt\lambda\,G)\bigr]},
\qquad G\sim N(0,1).
\end{equation}
\end{proposition}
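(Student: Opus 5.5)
On $[q,1]$ the diffusion is $dX_t=\beta^2\tanh(X_t)\,dt+\beta\,dW_t$. We identify it as a Doob $h$-transform of $\beta W$ by the space--time harmonic function
\[
H(t,x)=e^{-\beta^2 (t-q)/2}\cosh x .
\]
Since $\partial_t H+\frac{\beta^2}{2}\partial_{xx}H=0$ and $\beta^2\partial_x\log H=\beta^2\tanh x$, this matches the drift.

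Fix $x$ and condition on $X_q=x$. By the Markov property the path on $[q,1]$ depends only on $X_q$ and the increments of $W$ after time $q$, and these increments are independent of $\mathcal F_q$. So it suffices to treat the SDE started at $x$ at time $q$.

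Let $Y_s=x+\beta(W_s-W_q)$ for $s\ge q$. Define
\[
D_s=\frac{\cosh(Y_s)}{\cosh x}\,e^{-\beta^2(s-q)/2}.
\]
By Itô's formula, $dD_s=D_s\,\beta\tanh(Y_s)\,dW_s$. Since $|\tanh|\le 1$, Novikov's condition holds trivially, so $D$ is a true martingale with $\E[D_s]=1$, and it equals the Girsanov density $\exp\bigl(\int\beta\tanh(Y)\,dW-\frac12\int\beta^2\tanh^2(Y)\,ds\bigr)$. The identification of the two forms is exactly the Itô computation $d\log\cosh Y=\tanh Y\,dY+\frac{\beta^2}{2}\sech^2 Y\,ds$ together with $\tanh^2+\sech^2=1$.

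Under $d\mathbb Q=D_t\,d\Pp$ on $\mathcal F_t$, Girsanov's theorem says $\tilde W_s=W_s-W_q-\int_q^s\beta\tanh(Y_r)\,dr$ is a Brownian motion. Then $Y$ solves $dY=\beta^2\tanh(Y)\,ds+\beta\,d\tilde W$ started at $x$. The SDE has Lipschitz coefficients, so strong (hence weak) uniqueness holds, and the $\mathbb Q$-law of $Y_t$ equals the conditional law of $X_t$ given $X_q=x$. Therefore
\[
\E[\psi(X_t)\mid X_q=x]=\E[\psi(Y_t)D_t]=\frac{e^{-\lambda/2}}{\cosh x}\,\E\bigl[\psi(x+\sqrt\lambda G)\cosh(x+\sqrt\lambda G)\bigr],
\]
because $\beta^2(t-q)=\lambda$ and $Y_t\stackrel d= x+\sqrt\lambda\,G$.

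To reach \eqref{e:Doob}, apply the same identity with $\psi\equiv1$. Since $\E[D_t]=1$,
\[
\E\bigl[\cosh(x+\sqrt\lambda G)\bigr]=e^{\lambda/2}\cosh x .
\]
This can also be checked directly from the Gaussian moment generating function. Substituting this normalization gives the ratio form in \eqref{e:Doob}.

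The only delicate step is justifying that the conditional law given $X_q=x$ is the law of the SDE started at $x$, so that the $h$-transform can be applied pathwise. This follows from the strong Markov property of Lipschitz SDEs (or from the explicit strong solution driven by the post-$q$ increments), with the regular conditional distribution defined for every $x$ by continuity in $x$. The remaining verifications are routine because everything is bounded: $|\tanh|\le1$, and $\psi$ is bounded.
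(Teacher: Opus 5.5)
Your proposal is correct and follows essentially the paper's argument. You apply Girsanov with integrand $\beta\tanh(Y)$ to $Y=x+\beta(W-W_q)$, identify the density as $e^{-\lambda/2}\cosh(Y_t)/\cosh x$ via Itô's formula for $\log\cosh$, and normalize using $\E[\cosh(x+\sqrt\lambda\,G)]=e^{\lambda/2}\cosh x$. Your remarks on the Markov property and weak uniqueness just make explicit a step the paper asserts directly from the explicit SDE.
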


\begin{proof}
Let $\tau=t-q=\lambda/\beta^2$, and let
\[
Y_r=x+\beta B_r, \qquad 0\le r\le \tau,
\]
with $B$ a standard Brownian motion.  Define
\[
M_r=\exp\left(\beta\int_0^r \tanh(Y_u)\,dB_u-
\frac{\beta^2}{2}\int_0^r \tanh^2(Y_u)\,du\right).
\]
Since $|\tanh|\le 1$, Novikov's condition holds.  By Girsanov's theorem, under the tilted law
$d\mathbf Q=M_\tau\,d\mathbf P$, the process $Y$ solves
\[
dY_r=\beta^2\tanh(Y_r)\,dr+\beta\,d\widetilde B_r,
\qquad Y_0=x,
\]
so $Y_\tau$ under $\mathbf Q$ has the law of $X_t$ conditioned on $X_q=x$ (by \Cref{prop:explicit-RS}).

Now set $V(y)=\log\cosh y$.  Since $V'=\tanh$ and $V''=\sech^2$, It\^o's formula gives
\[
V(Y_r)-V(x)=\beta\int_0^r\tanh(Y_u)\,dB_u+\frac{\beta^2}{2}\int_0^r\sech^2(Y_u)\,du.
\]
Using $\sech^2+\tanh^2=1$, one gets
\[
M_r=e^{-\beta^2 r/2}\frac{\cosh(Y_r)}{\cosh x}.
\]
Therefore
\[
\E\bigl[\psi(X_t)\mid X_q=x\bigr]
=\E_{\mathbf Q}[\psi(Y_\tau)]
=\E_{\mathbf P}[\psi(Y_\tau)M_\tau]
=\frac{e^{-\lambda/2}}{\cosh x}
\E\bigl[\psi(x+\sqrt\lambda\,G)\cosh(x+\sqrt\lambda\,G)\bigr].
\]
Since
\[
\E\big[\cosh(x+\sqrt\lambda\,G)\big]=e^{\lambda/2}\cosh x,
\]
this is exactly \eqref{e:Doob}.
\end{proof}

Rescale time by
\[
\widetilde m_\lambda=m_{q+\lambda/\beta^2},
\qquad
v_\lambda=1-\widetilde m_\lambda^2, \qquad 0\le \lambda\le \beta^2(1-q).
\]
By time change, \eqref{e:m-sde} becomes
\begin{equation}\label{e:m-lambda}
d\widetilde m_\lambda=(1-\widetilde m_\lambda^2)\,dB_\lambda.
\end{equation}
Hence
\begin{equation}\label{e:v-lambda}
dv_\lambda=-2\widetilde m_\lambda v_\lambda\,dB_\lambda-v_\lambda^2\,d\lambda.
\end{equation}
Define
\[
\overline M(\lambda)=\E[v_\lambda],
\qquad
a_2(\lambda)=\E[v_\lambda^2].
\]
Integrating \eqref{e:v-lambda} from $0$ to $\lambda$, taking expectations, and using $\E[v_0] = 1-q$  gives
\[
\overline M(\lambda)=1-q-\int_0^\lambda a_2(s)\,ds.
\]
Since $\lambda\mapsto a_2(\lambda)$ is bounded and continuous, by dominated convergence we have 
$\overline M\in C^1$ and
\begin{equation}\label{e:Mbar-prime}
\overline M'(\lambda)=-a_2(\lambda).
\end{equation}

For the next lemma, recall that
\[
S=\sech^2(X_q)\in(0,1),
\qquad
X_q\stackrel d=h+\sigma Z,
\qquad \sigma^2=\beta^2 q.
\]

\begin{lemma}\label{lem:Fkernel}
For $\lambda\ge 0$ define
\begin{equation}\label{e:F-def}
F_\lambda(s)=
 e^{-\lambda/2}\E\left[\cosh(\sqrt\lambda\,G)
 \frac{1+(4-3s)\sinh^2(\sqrt\lambda\,G)}{(1+s\sinh^2(\sqrt\lambda\,G))^3}
 \right],
 \qquad s\in[0,1].
\end{equation}
Then, for every $0\le \lambda\le \beta^2(1-q)$,
\begin{equation}\label{e:a2-kernel}
a_2(\lambda)=\E\bigl[S^2F_\lambda(S)\bigr].
\end{equation}
Moreover, for every $\lambda>0$, the function $F_\lambda$ is strictly decreasing on $[0,1]$.
\end{lemma}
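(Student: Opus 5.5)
The plan is to compute $a_2(\lambda)=\E[v_\lambda^2]$ by conditioning on $X_q$ and applying \Cref{prop:Doob} with $\psi=\sech^4$. Since $v_\lambda=1-\tanh^2(X_{q+\lambda/\beta^2})=\sech^2(X_{q+\lambda/\beta^2})$, the tower property together with \eqref{e:Doob} and $\E[\cosh(x+\sqrt\lambda G)]=e^{\lambda/2}\cosh x$ gives
\[
a_2(\lambda)=\E\bigl[K_\lambda(X_q)\bigr],\qquad
K_\lambda(x)=\frac{e^{-\lambda/2}}{\cosh x}\,\E\bigl[\sech^3(x+\sqrt\lambda\,G)\bigr].
\]
The remaining task is to show that $K_\lambda(x)=s^2F_\lambda(s)$ with $s=\sech^2 x$. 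Writing $K_\lambda$ as a function of $s$ is legitimate because $K_\lambda$ is even in $x$, by the symmetry $G\mapsto -G$.

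For this identity I would write $y=\sqrt\lambda\,G$, $c=\cosh x$, $C=\cosh y$, $W=\sinh y$, so that $\cosh(x+y)=cC+\sinh x\,W$. Since $G$ is symmetric, I can replace $\sech^3(x+y)$ by its symmetrization over $y\mapsto -y$. Put $A=cC$ and $B=\sinh x\,W$; then
\[
\tfrac12\bigl((A+B)^{-3}+(A-B)^{-3}\bigr)=\frac{A^3+3AB^2}{(A^2-B^2)^3}.
\]
Using $C^2=1+W^2$ and $\sinh^2x=c^2-1$, one finds $A^2-B^2=c^2+W^2$. Dividing by $c$ gives the integrand
\[
\frac{C\bigl(c^2C^2+3(c^2-1)W^2\bigr)}{(c^2+W^2)^3}.
\]
Now multiply numerator and denominator by $s^3=c^{-6}$ and substitute $C^2=1+W^2$. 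This yields
\[
s^2\,C\,\frac{1+(4-3s)W^2}{(1+sW^2)^3},
\]
which is exactly \eqref{e:F-def}. Evaluating at $x=X_q$ gives \eqref{e:a2-kernel}. All expectations are finite because the integrand is bounded by $\cosh(\sqrt\lambda G)(1+4\sinh^2(\sqrt\lambda G))$, and this is Gaussian-integrable.

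For monotonicity, I would fix $w=\sinh^2(\sqrt\lambda G)\ge 0$ and differentiate $\phi_w(s)=\bigl(1+(4-3s)w\bigr)(1+sw)^{-3}$ in $s$. The numerator of $\phi_w'(s)$, after clearing $(1+sw)^{-4}$, is
\[
-3w(1+sw)-3w\bigl(1+(4-3s)w\bigr)=-3w\bigl(2+(4-2s)w\bigr),
\]
which is strictly negative whenever $w>0$ and $s\in[0,1]$. Next, differentiate under the expectation; this is justified by dominated convergence, since $|\partial_s\phi_w|\le C(w+w^2)$ and $\cosh(\sqrt\lambda G)$ times a polynomial in $\sinh^2$ is integrable. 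For $\lambda>0$ we have $w>0$ almost surely, so $F_\lambda'(s)<0$ on $[0,1]$. Hence $F_\lambda$ is strictly decreasing.

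The main obstacle is the algebraic reduction in the second step. The key points are the symmetrization in $G$ and the identity $A^2-B^2=c^2+\sinh^2 y$, which together make the dependence on $x$ pass entirely through $s$. The rest is routine bookkeeping.
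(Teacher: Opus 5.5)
Your proposal is correct and follows essentially the same route as the paper: you condition on $X_q$, apply \Cref{prop:Doob} with $\psi=\sech^4$, symmetrize in $G$ to reduce to $\tfrac12\bigl((A+B)^{-3}+(A-B)^{-3}\bigr)$, simplify so that everything depends on $x$ only through $s$, and get strict monotonicity from the sign of $\partial_s$ of the integrand. The only differences are cosmetic. You keep $\cosh x$ inside $A$ and rescale by $c^{-6}$ at the end, whereas the paper factors it out first using $m=\tanh x$; and you justify differentiating under the expectation, whereas the paper simply averages the pointwise strict inequality.
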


\begin{proof}
Fix $x\in\R$, set $s=\sech^2 x$, $m=\tanh x$, and write $\eta=\sqrt\lambda\,G$.
Apply \eqref{e:Doob} with $\psi(y)=\sech^4 y$:
\[
\E[v_\lambda^2\mid X_q=x]
=\E\bigl[\sech^4(X_{q+\lambda/\beta^2})\mid X_q=x\bigr]
=e^{-\lambda/2}\frac{\E\big[\sech^3(x+\eta)\big]}{\cosh x}.
\]
By symmetry of $\eta$,
\[
\frac{\E[\sech^3(x+\eta)]}{\cosh x}
=\frac12\E\left[\frac{\sech^3(x+\eta)}{\cosh x}+\frac{\sech^3(x-\eta)}{\cosh x}\right].
\]
Since
\[
\cosh(x\pm\eta)=\cosh x\bigl(\cosh\eta\pm m\sinh\eta\bigr),
\]
this equals
\[
\frac{s^2}{2}\E\left[
\frac{1}{(a+b)^3}+\frac{1}{(a-b)^3}
\right],
\qquad a=\cosh\eta,\quad b=m\sinh\eta.
\]
Now
\[
\frac{1}{(a+b)^3}+\frac{1}{(a-b)^3}=\frac{2a(a^2+3b^2)}{(a^2-b^2)^3}.
\]
Using
\[
a^2-b^2=\cosh^2\eta-m^2\sinh^2\eta=1+s\sinh^2\eta
\]
and
\[
a^2+3b^2=\cosh^2\eta+3m^2\sinh^2\eta=1+(4-3s)\sinh^2\eta,
\]
one obtains
\[
\E[v_\lambda^2\mid X_q=x]=s^2F_\lambda(s).
\]
Averaging over $X_q$ gives \eqref{e:a2-kernel}.

For monotonicity, fix $u\ge 0$ and set
\[
\Phi(s,u)=\frac{1+(4-3s)u}{(1+su)^3}.
\]
Then
\[
\partial_s\Phi(s,u)=-\frac{6u(1+2u-su)}{(1+su)^4}.
\]
Since $s\in[0,1]$ and $u\ge 0$, one has $1+2u-su\ge 1+u>0$, so
$\partial_s\Phi(s,u)\le 0$, strictly if $u>0$.  Averaging in $u=\sinh^2(\sqrt\lambda\,G)$ proves that
$F_\lambda$ is strictly decreasing when $\lambda>0$.
\end{proof}

\begin{lemma}\label{lem:reference}
Let
\begin{equation}\label{e:nu-def}
\nu(ds)=\frac{ds}{2\sqrt{1-s}}, \qquad s\in(0,1).
\end{equation}
Then $\nu$ is a probability measure on $(0,1)$ and, for every $\lambda\ge 0$,
\begin{equation}\label{e:nu-mean-zero}
\int_0^1 (F_\lambda(s)-1)\,\nu(ds)=0.
\end{equation}
\end{lemma}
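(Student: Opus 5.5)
The plan is to identify $\nu$ as the image of an explicit measure on $\R$ under the map $x\mapsto\sech^2 x$. Then the integral of $F_\lambda$ becomes a Gaussian average of a translation-invariant integral, which can be evaluated in closed form.

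\emph{Step 1: $\nu$ as a push-forward.} Substitute $s=\sech^2 x$ for $x>0$. Then $1-s=\tanh^2 x$, so $\sqrt{1-s}=\tanh x$, and $ds=-2\sech^2 x\tanh x\,dx$. Hence
\[
\nu(ds)=\frac{ds}{2\sqrt{1-s}}\quad\text{corresponds to}\quad \sech^2 x\,dx \ \text{on } (0,\infty).
\]
By evenness, $\nu$ is the law of $\sech^2 X$, where $X$ has density $\tfrac12\sech^2 x$ on $\R$. Since $\int_\R \sech^2 x\,dx=2$, this shows at once that $\nu$ is a probability measure. It therefore suffices to prove $\int_\R \tfrac12 \sech^2 x\,F_\lambda(\sech^2 x)\,dx=1$.

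\emph{Step 2: rewrite $F_\lambda$.} Reuse the identity established in the proof of Lemma~\ref{lem:Fkernel}. With $s=\sech^2 x$ and $\eta=\sqrt\lambda\,G$, that proof shows
\[
s^2F_\lambda(s)=e^{-\lambda/2}\frac{\E[\sech^3(x+\eta)]}{\cosh x}.
\]
That computation is pure algebra and holds for every real $x$ and every $\lambda\ge0$, independently of the constraint $\lambda\le\beta^2(1-q)$. Consequently
\[
F_\lambda(\sech^2 x)=e^{-\lambda/2}\cosh^3 x\,\E[\sech^3(x+\eta)].
\]

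\emph{Step 3: integrate.} Substituting and applying Tonelli (all integrands are positive), we get
\[
\int_\R\tfrac12\sech^2 x\,F_\lambda(\sech^2x)\,dx=\tfrac12 e^{-\lambda/2}\,\E\Big[\int_\R \cosh x\,\sech^3(x+\eta)\,dx\Big].
\]
In the inner integral, set $y=x+\eta$ and expand $\cosh(y-\eta)=\cosh y\cosh\eta-\sinh y\sinh\eta$. This gives
\[
\cosh\eta\int_\R\sech^2y\,dy-\sinh\eta\int_\R\tanh y\,\sech^2 y\,dy=2\cosh\eta,
\]
because the second integrand is odd. Finally, $\E\cosh(\sqrt\lambda\,G)=e^{\lambda/2}$, so the whole expression equals $1$, which is \eqref{e:nu-mean-zero}.

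The only real obstacle is Step 1, namely recognizing $\nu$ as the push-forward of $\tfrac12\sech^2 x\,dx$. Once that is seen, the translation trick in Step 3 is routine. Integrability is not an issue: $\cosh x\,\sech^3(x+\eta)$ decays like $e^{-2|x|}$ for fixed $\eta$, and the Gaussian expectation of $\cosh\eta$ is finite.
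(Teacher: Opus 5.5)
Your proof is correct, and it takes a genuinely different route from the paper's. The paper works entirely on $(0,1)$. For each fixed $u\ge 0$ it substitutes $s=1-r^2$ and exhibits the antiderivative $r/(a-br^2)^2$, with $a=1+u$ and $b=u$. This gives $\int_0^1 \frac{1+(4-3s)u}{(1+su)^3}\,\nu(ds)=1$ pointwise in $u$. Only afterwards does it average over $u=\sinh^2(\sqrt\lambda\,G)$ against the weight $e^{-\lambda/2}\cosh(\sqrt\lambda\,G)$.

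You instead pull $\nu$ back to $\tfrac12\sech^2x\,dx$ on $\R$ and undo the $\pm\eta$ symmetrization from the proof of Lemma~\ref{lem:Fkernel}. You are right that $s^2F_\lambda(s)=e^{-\lambda/2}\E[\sech^3(x+\eta)]/\cosh x$ is pure algebra, valid for all $x\in\R$ and all $\lambda\ge 0$; the restriction $\lambda\le\beta^2(1-q)$ enters only through the Doob formula. You then finish with translation invariance of Lebesgue measure.

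The paper's computation is self-contained and gives the slightly stronger pointwise-in-$u$ identity, but its antiderivative has to be spotted by inspection. Your argument needs no guesswork, and it also explains why $\nu$ is the right reference measure. Under your substitution, $\nu(ds)/s^2$ corresponds to $\tfrac12\cosh^2x\,dx$. Up to a constant, this is an invariant (indeed reversible) measure of the tilted diffusion $dY=\tanh(Y)\,d\lambda+dB_\lambda$. So $\int F_\lambda\,d\nu=1$ is simply the invariance of $\cosh^2x\,dx$ under that diffusion's transition kernel, applied to the function $\sech^4$. Your translation step is this invariance seen through the $\cosh$ reweighting. If you keep $\eta$ fixed and symmetrize in $\pm\eta$ before taking the Gaussian expectation, your computation also recovers the paper's pointwise-in-$u$ identity.
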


\begin{proof}
The total mass of $\nu$ is $1$.  Fix $u\ge 0$ and consider
\[
J(u)=\int_0^1 \frac{1+(4-3s)u}{(1+su)^3}\,\frac{ds}{2\sqrt{1-s}}.
\]
Substitute $s=1-r^2$.  Then $ds/(2\sqrt{1-s})=-dr$, so
\[
J(u)=\int_0^1 \frac{1+(1+3r^2)u}{(1+(1-r^2)u)^3}\,dr.
\]
With $a=1+u$ and $b=u$, the integrand is
\[
\frac{a+3br^2}{(a-br^2)^3}=\frac{d}{dr}\left(\frac{r}{(a-br^2)^2}\right).
\]
Hence
\[
J(u)=\left[\frac{r}{(a-br^2)^2}\right]_{r=0}^{r=1}=1,
\]
because $a-b=1$.

Now use \eqref{e:F-def} and $J(\sinh^2(\sqrt\lambda G))=1$ to obtain
\[
\int_0^1 F_\lambda(s)\,\nu(ds)
=e^{-\lambda/2}\E\big[\cosh(\sqrt\lambda\,G)\big]
=1.
\]
Thus \eqref{e:nu-mean-zero} holds.
\end{proof}

\begin{lemma}\label{lem:density-S}
Let
\[
y(s)=\arcosh(s^{-1/2}), \qquad s\in(0,1).
\]
Then $S=\sech^2(h+\sigma Z)$ has density
\begin{equation}\label{e:density-S}
\rho_S(s)=
\frac{1}{2\sigma\sqrt{2\pi}\,s\sqrt{1-s}}
\left(
 e^{-\frac{(y(s)-h)^2}{2\sigma^2}}+e^{-\frac{(y(s)+h)^2}{2\sigma^2}}
\right),
\qquad s\in(0,1).
\end{equation}
\end{lemma}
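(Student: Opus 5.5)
This is a change-of-variables computation. Write $X=h+\sigma Z$, with density $\phi(x)=\frac{1}{\sigma\sqrt{2\pi}}e^{-(x-h)^2/(2\sigma^2)}$. The map $x\mapsto \sech^2 x$ is even, so we first pass to $|X|$.

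\textbf{Step 1: the law of $|X|$.} The density of $|X|$ on $(0,\infty)$ is
\[
\phi(y)+\phi(-y)=\frac{1}{\sigma\sqrt{2\pi}}\Bigl(e^{-\frac{(y-h)^2}{2\sigma^2}}+e^{-\frac{(y+h)^2}{2\sigma^2}}\Bigr).
\]
Since $\Pp(X=0)=0$, the point $y=0$ can be ignored.

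\textbf{Step 2: invert the map.} On $(0,\infty)$, the map $y\mapsto s=\sech^2 y$ is a strictly decreasing smooth bijection onto $(0,1)$. Its inverse is $y(s)=\arcosh(s^{-1/2})$, since $\cosh y=s^{-1/2}$.

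\textbf{Step 3: the Jacobian.} Differentiating $s=\sech^2 y$ gives
\[
\frac{ds}{dy}=-2\sech^2 y\,\tanh y .
\]
At $y=y(s)$ we have $\sech^2 y=s$ and $\tanh y=\sqrt{1-s}$, the positive root because $y>0$. Hence
\[
\left|\frac{dy}{ds}\right|=\frac{1}{2s\sqrt{1-s}}.
\]

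\textbf{Step 4: assemble.} The one-dimensional change-of-variables formula gives
\[
\rho_S(s)=\bigl(\phi(y(s))+\phi(-y(s))\bigr)\left|\frac{dy}{ds}\right|,
\]
which is exactly \eqref{e:density-S}.

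None of the steps is difficult. The only points needing care are using the even symmetry correctly, so that both Gaussian terms appear, and choosing the sign $\tanh y(s)=+\sqrt{1-s}$.
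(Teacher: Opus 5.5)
Your proposal is correct and follows the paper's proof. Both use a one-dimensional change of variables that sums the Gaussian density over the two preimages $\pm y(s)$ with Jacobian $\left|dy/ds\right| = 1/(2s\sqrt{1-s})$; passing to $|X|$ first is just a tidier way of writing the two-branch sum, and it tacitly uses $\sigma>0$, which the paper notes holds because $h>0$ forces $q>0$.
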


\begin{proof}
The map $x\mapsto \sech^2 x$ has the two preimages $\pm y(s)$, where
$y(s)=\arcosh(s^{-1/2})$.  Since
\[
\frac{dy}{ds}=-\frac{1}{2s\sqrt{1-s}},
\]
the change-of-variables formula applied to $X_q\sim N(h,\sigma^2)$ yields \eqref{e:density-S}.
\end{proof}

\begin{lemma}\label{lem:MLR}
Define the finite measure
\[
\mu_2(A)=\E\bigl[S^2\mathbf 1_{\{S\in A\}}\bigr].
\]
Then $\mu_2\ll \nu$, and the density ratio is
\begin{equation}\label{e:r-def}
r(s)=\frac{d\mu_2}{d\nu}(s)
=\frac{s}{\sigma\sqrt{2\pi}}
\left(
 e^{-\frac{(y(s)-h)^2}{2\sigma^2}}+e^{-\frac{(y(s)+h)^2}{2\sigma^2}}
\right).
\end{equation}
If $h\le \sigma^2$, then $r$ is increasing on $(0,1)$.
\end{lemma}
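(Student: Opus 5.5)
The density ratio follows directly from \Cref{lem:density-S}. Since $\mu_2(ds)=s^2\rho_S(s)\,ds$ and $\nu(ds)=ds/(2\sqrt{1-s})$, we get
\[
r(s)=\frac{s^2\rho_S(s)}{1/(2\sqrt{1-s})}=2\sqrt{1-s}\,s^2\rho_S(s).
\]
Substituting \eqref{e:density-S}, the factor $2 s\sqrt{1-s}$ cancels against the denominator of $\rho_S$ and leaves exactly \eqref{e:r-def}. Absolute continuity is immediate, because $\mu_2$ has a Lebesgue density on $(0,1)$ and $\nu$ has a strictly positive one there.

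For monotonicity we change variables to $y=y(s)=\arcosh(s^{-1/2})$, so that $s=\sech^2 y$. The map $s\mapsto y(s)$ is a strictly decreasing bijection from $(0,1)$ onto $(0,\infty)$, so it suffices to show that
\[
R(y)=\sech^2 y\,\Bigl(e^{-\frac{(y-h)^2}{2\sigma^2}}+e^{-\frac{(y+h)^2}{2\sigma^2}}\Bigr)
\]
is nonincreasing in $y>0$. Factor out $e^{-(y^2+h^2)/(2\sigma^2)}$ and write $c=h/\sigma^2$. Then, up to the constant $2e^{-h^2/(2\sigma^2)}$,
\[
R(y)\;\propto\; \sech^2 y\,\cosh(cy)\,e^{-y^2/(2\sigma^2)}.
\]

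It remains to check that $\log R$ has nonpositive derivative on $y>0$:
\[
\frac{d}{dy}\log R(y)=-2\tanh y+c\tanh(cy)-\frac{y}{\sigma^2}.
\]
The hypothesis $h\le\sigma^2$ says precisely that $c\le 1$. Since $\tanh$ is increasing and nonnegative on $[0,\infty)$, for $y>0$ we have $c\tanh(cy)\le \tanh(cy)\le\tanh y$. Hence the derivative is at most $-\tanh y-y/\sigma^2<0$. So $R$ is strictly decreasing in $y$, and $r$ is strictly increasing in $s$ on $(0,1)$.

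The only place needing care is the comparison $c\tanh(cy)\le\tanh y$, which is where $h\le\sigma^2$ enters; the Gaussian factor merely helps. I don't expect a real obstacle here. The main risk is bookkeeping: recomputing the cancellation of the Jacobian from \Cref{lem:density-S} against $d\nu/ds$ so that the prefactor is $s$, and not $s^2$ or $1/s$, since the exponent of $\sech y$ in $R$ is what makes the sign work.
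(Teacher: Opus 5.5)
Your proposal is correct and matches the paper's proof essentially step for step. You use the same Jacobian cancellation giving the prefactor $s$, the same substitution $s=\sech^2 y$ leading to $\sech^2 y\,\cosh(cy)\,e^{-y^2/(2\sigma^2)}$, and the same log-derivative bound with $c=h/\sigma^2\le 1$ (you chain $c\tanh(cy)\le\tanh(cy)\le\tanh y$ where the paper writes $c\tanh(cy)\le c\tanh y\le\tanh y$, an immaterial difference).
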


\begin{proof}
The formula for $r$ follows by multiplying \eqref{e:density-S} by $s^2$ and dividing by the density
of $\nu$.

Now write $s=\sech^2 y$ with $y\ge 0$.  Then \eqref{e:r-def} becomes
\[
r(s(y))=
\frac{2}{\sigma\sqrt{2\pi}}
\sech^2 y\,e^{-\frac{y^2+h^2}{2\sigma^2}}\cosh\left(\frac{hy}{\sigma^2}\right).
\]
Therefore
\[
\frac{d}{dy}\log r\big(s(y)\big)
=-2\tanh y-\frac{y}{\sigma^2}+\frac{h}{\sigma^2}\tanh\left(\frac{hy}{\sigma^2}\right).
\]
Set $c=h/\sigma^2\le 1$.  Then for $y\ge 0$,
\[
c\tanh(cy)\le c\tanh y\le \tanh y,
\]
so
\[
\frac{d}{dy}\log r(s(y))\le -\tanh y-\frac{y}{\sigma^2}<0 \qquad (y>0).
\]
Thus $y\mapsto r(s(y))$ is decreasing, and since $y\mapsto s(y)=\sech^2 y$ is decreasing, the map
$s\mapsto r(s)$ is increasing.
\end{proof}

\begin{proposition}\label[proposition]{prop:hard-hsmall}
Assume
\begin{equation}\label{e:h-le-sigma2}
h\le \sigma^2=\beta^2 q.
\end{equation}
Then for all $\lambda$ such that $0\le \lambda\le \beta^2(1-q)$, 
\begin{equation}\label{e:a2-dec}
a_2(\lambda)\le a_2(0)=\E[S^2]=\frac{\alpha}{\beta^2}.
\end{equation}
Further, if $\alpha\le 1$, then for all $t \in [q,1]$,
\[
\E[m_t^2]\le t.
\]
\end{proposition}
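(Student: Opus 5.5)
We first prove \eqref{e:a2-dec} by a covariance (Chebyshev/Harris) inequality, and then integrate twice to obtain the bound on $\E[m_t^2]$.

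\textbf{Step 1: the kernel inequality.} By \Cref{lem:Fkernel}, $a_2(\lambda)=\E[S^2F_\lambda(S)]=\int_0^1 F_\lambda(s)\,\mu_2(ds)$. At $\lambda=0$ we have $F_0\equiv 1$, since $\sinh(0)=0$, so $a_2(0)=\mu_2((0,1))=\E[S^2]$. By \eqref{e:alpha-def} and \eqref{e:Xq-law}, this equals $\alpha/\beta^2$. Using \Cref{lem:MLR}, write
\[
a_2(\lambda)-a_2(0)=\int_0^1 (F_\lambda(s)-1)\,r(s)\,\nu(ds).
\]
\Cref{lem:reference} says that $F_\lambda-1$ has mean zero under the probability measure $\nu$. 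Hence this integral equals the $\nu$-covariance
\[
\int_0^1 (F_\lambda-1)\bigl(r-\textstyle\int r\,d\nu\bigr)\,d\nu.
\]
Here $F_\lambda$ is decreasing (\Cref{lem:Fkernel}) and $r$ is increasing under \eqref{e:h-le-sigma2} (\Cref{lem:MLR}). Chebyshev's covariance inequality for oppositely monotone functions on a probability space therefore makes the covariance $\le 0$. This gives $a_2(\lambda)\le a_2(0)$.

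The one technical point is integrability: $F_\lambda$ must be bounded on $[0,1]$ and $r$ must be $\nu$-integrable, so that the covariance inequality applies. For $r$, $\int r\,d\nu=\E[S^2]\le 1$. For $F_\lambda$, the bound $\Phi(s,u)\le 1+4u$ together with $\E[\cosh(\eta)(1+4\sinh^2\eta)]<\infty$ shows that $F_\lambda$ is bounded. The standard proof via $\iint (F(s)-F(s'))(r(s)-r(s'))\,\nu(ds)\nu(ds')\le 0$ then goes through.

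\textbf{Step 2: from $a_2$ to $\E[m_t^2]$.} From \eqref{e:fprime} and the time change $t=q+\lambda/\beta^2$ we get $f'(t)=\beta^2 a_2(\lambda)-1$. Step 1 gives $f'(t)\le \beta^2\cdot\alpha/\beta^2-1=\alpha-1\le 0$ on $[q,1]$. Since $f(q)=0$ by \Cref{prop:m-diffusion}, integrating yields $f(t)\le (\alpha-1)(t-q)\le 0$, that is, $\E[m_t^2]\le t$.

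The main obstacle is Step 1. Step 2 is routine. In Step 1 the only delicate part is setting up the covariance identity correctly, namely rewriting $a_2(\lambda)-a_2(0)$ against the reference measure $\nu$ using \Cref{lem:reference}, and checking the required integrability.
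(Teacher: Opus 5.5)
Your proposal is correct and follows the paper's proof essentially step for step. Step 1 is the same covariance argument: you write $a_2(\lambda)-a_2(0)$ as $\int (F_\lambda-1)\,r\,d\nu$, pair the decreasing $F_\lambda-1$ with the increasing $r$, and use that $F_\lambda-1$ has $\nu$-mean zero. Step 2 integrates $f'\le \alpha-1$ from $f(q)=0$, which is exactly the paper's $D(\lambda)\ge 0$ argument, since $D(\lambda)=-f(q+\lambda/\beta^2)$. Your integrability check (bounding $F_\lambda$ via $\Phi(s,u)\le 1+4u$, and using $\int r\,d\nu=\E[S^2]\le 1$) is a small detail the paper leaves implicit.
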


\begin{proof}
By \eqref{e:a2-kernel},
\[
a_2(\lambda)-a_2(0)=\int_0^1 (F_\lambda(s)-1)\,\mu_2(ds)
=\int_0^1 g_\lambda(s)r(s)\,\nu(ds),
\]
where $g_\lambda=F_\lambda-1$.  By Lemma~\ref{lem:Fkernel}, $g_\lambda$ is decreasing.  By
Lemma~\ref{lem:MLR}, $r$ is increasing under \eqref{e:h-le-sigma2}.  Since $\nu$ is a probability
measure, the covariance identity
\[
2\left(\int g_\lambda r\,d\nu-\int g_\lambda\,d\nu\int r\,d\nu\right)
=
\iint \big(g_\lambda(s)-g_\lambda(t)\big)\big(r(s)-r(t)\big)\,\nu(ds)\nu(dt)
\]
shows that
\[
\int g_\lambda r\,d\nu\le \int g_\lambda\,d\nu\int r\,d\nu.
\]
By Lemma~\ref{lem:reference}, $\int g_\lambda\,d\nu=0$, so
\[
a_2(\lambda)-a_2(0)\le 0.
\]
This proves \eqref{e:a2-dec}.

Now define
\[
D(\lambda)=\overline M(\lambda)-\left(1-q-\frac{\lambda}{\beta^2}\right).
\]
Since $\overline M(0)=1-q$, one has $D(0)=0$.  By \eqref{e:Mbar-prime},
\[
D'(\lambda)=\frac{1}{\beta^2}-a_2(\lambda)
\ge \frac{1}{\beta^2}-a_2(0)
=\frac{1-\alpha}{\beta^2}\ge 0.
\]
Hence $D$ is nondecreasing and, since $D(0)=0$, one has $D(\lambda)\ge 0$ for all
$0\le \lambda\le \beta^2(1-q)$, meaning
\[
\overline M(\lambda)\ge 1-q-\frac{\lambda}{\beta^2}.
\]
Because $\overline M(\lambda)=\E[1-\widetilde m_\lambda^2]$ and $\lambda=\beta^2(t-q)$, this is exactly
\[
\E[m_t^2]\le t, \qquad q\le t\le 1.
\]
\end{proof}

\begin{proposition}\label[proposition]{thm:hard-full}
We have
\begin{equation}\label{e:hard-full}
\E\big[ u_x(t,X_t)^2\big] \le t \qquad \forall t\in[q,1].
\end{equation}
\end{proposition}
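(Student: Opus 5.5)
This is a case split that assembles results already proved in this section. The first step is to rewrite the quantity in the statement. By \Cref{prop:explicit-RS}, for $t\in[q,1]$ we have $u_x(t,x)=\tanh x$. Hence $u_x(t,X_t)=\tanh(X_t)=m_t$, and \eqref{e:hard-full} is equivalent to $\E[m_t^2]\le t$ for all $t\in[q,1]$. The interface point $t=q$ needs no separate treatment: the identity $u_x(q,x)=\tanh x$ holds there by \eqref{e:ux-hard}, and $\E[m_q^2]=q$ by \eqref{e:q-fp}.

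We then split according to the sign of $\beta^2(1-q)-1$.

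\emph{Case $\beta^2(1-q)\le 1$.} \Cref{prop:hard-small} gives $\E[m_t^2]\le t$ on $[q,1]$ directly. This uses only $\beta>0$ and the strict moment gap of \Cref{lem:alpha-gap}; the hypothesis $\alpha\le 1$ is not needed here.

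\emph{Case $\beta^2(1-q)>1$.} \Cref{prop:h-less-beta2q} yields $h<\sigma^2=\beta^2q$, so in particular $h\le\sigma^2$, which is the hypothesis \eqref{e:h-le-sigma2}. The standing assumption $\alpha\le 1$ of \Cref{t:main} is in force, so the second assertion of \Cref{prop:hard-hsmall} applies and gives $\E[m_t^2]\le t$ for every $t\in[q,1]$. One point to check is the time parametrization: \Cref{prop:hard-hsmall} is stated in terms of $\lambda=\beta^2(t-q)$ ranging over $[0,\beta^2(1-q)]$, and as $t$ ranges over $[q,1]$ this variable covers exactly that interval.

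The two cases are exhaustive, so combining them proves \eqref{e:hard-full}. There is no real obstacle, since all the analytic work sits in \Cref{prop:hard-small} and \Cref{prop:hard-hsmall}. The only points requiring care are the bookkeeping ones already noted: that \Cref{prop:h-less-beta2q} delivers the (strict) hypothesis needed by \Cref{prop:hard-hsmall}, and that $\alpha\le1$ is available globally from the standing assumptions.
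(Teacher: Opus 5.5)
Your proposal is correct and follows the paper's proof: you identify $u_x(t,X_t)=m_t$ on $[q,1]$ via \eqref{e:ux-hard}, then split on whether $\beta^2(1-q)\le 1$, using \Cref{prop:hard-small} in the first case and \Cref{prop:h-less-beta2q} followed by \Cref{prop:hard-hsmall} in the second. Your remarks that the first case does not use $\alpha\le 1$ and that the second case relies on the standing assumption $\alpha\le1$ are also accurate.
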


\begin{proof}
If $\beta^2(1-q)\le 1$, the claim follows from Proposition~\ref{prop:hard-small}.  If instead
$\beta^2(1-q)>1$, then Proposition~\ref{prop:h-less-beta2q} gives $h<\beta^2 q$, so
Proposition~\ref{prop:hard-hsmall} applies and yields the claim.  We also used that on $[q,1]$ one has
$u_x(t,x)=\tanh x$ by \eqref{e:ux-hard}. 
\end{proof}

\section{Conclusion}\label{s:conclusion}

\begin{proposition}\label[proposition]{prop:G-min}
For $\mu=\delta_q$, the function
\[
G_{\delta_q}(t)=\int_t^1 \frac{\beta^2}{2}\bigl(\E\big[u_x(s,X_s)^2\big]-s\bigr)\,ds
\]
attains its global minimum at $t=q$.
\end{proposition}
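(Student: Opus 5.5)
The plan is to read off the monotonicity of $G_{\delta_q}$ from the sign of its integrand, combining \Cref{cor:left-sign} on $[0,q)$ with \Cref{thm:hard-full} on $[q,1]$. Set
\[
\gamma(s)=\frac{\beta^2}{2}\bigl(\E\big[u_x(s,X_s)^2\big]-s\bigr),\qquad s\in[0,1],
\]
so that $G_{\delta_q}(t)=\int_t^1\gamma(s)\,ds$. The function $\gamma$ is bounded, since $|u_x|\le 1$ by \Cref{p:PDE-wellposed}. It is measurable, indeed piecewise continuous: $g$ is $C^1$ on $[0,q]$ by \Cref{prop:soft-mtg}, and $s\mapsto\E[m_s^2]$ is $C^1$ on $[q,1]$ by \Cref{prop:m-diffusion}. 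Hence $G_{\delta_q}$ is Lipschitz, and differences of its values are integrals of $\gamma$.

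First I will treat $t\in[0,q]$. Here
\[
G_{\delta_q}(t)-G_{\delta_q}(q)=\int_t^q\gamma(s)\,ds.
\]
By \Cref{cor:left-sign}, $\gamma(s)\ge 0$ for every $s\in[0,q)$, so this difference is nonnegative, giving $G_{\delta_q}(t)\ge G_{\delta_q}(q)$. The single point $s=q$ has measure zero and does not affect the integral; in any case $\gamma(q)=0$ by the fixed-point equation \eqref{e:q-fp}.

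Next I will treat $t\in[q,1]$. Here
\[
G_{\delta_q}(t)-G_{\delta_q}(q)=-\int_q^t\gamma(s)\,ds.
\]
\Cref{thm:hard-full} gives $\E[u_x(s,X_s)^2]\le s$ for all $s\in[q,1]$, that is $\gamma\le 0$ there. So this difference is again nonnegative, and $G_{\delta_q}(t)\ge G_{\delta_q}(q)$.

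Together these show that $G_{\delta_q}$ is nonincreasing on $[0,q]$ and nondecreasing on $[q,1]$, so its global minimum over $[0,1]$ is attained at $t=q$. All the substantive content has already been established in \Cref{cor:left-sign} and \Cref{thm:hard-full}, so I expect no real obstacle. The only point needing care is that $\E[u_x(s,X_s)^2]$ is well defined and integrable across the interface $s=q$. This follows because $u_x$ has continuous one-sided limits at $t=q$, both equal to $\tanh x$, as noted after \Cref{prop:explicit-RS}. With \Cref{prop:G-min} in hand, \Cref{p:JT} (since $\supp\delta_q=\{q\}$) and \Cref{thm:Talagrand} then give the theorem via $\Par(\delta_q)=\Phi_{\mathrm{RS}}$.
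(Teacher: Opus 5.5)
Your proposal is correct and follows the paper's proof: both obtain the global minimum at $q$ from the sign of the integrand, using \Cref{cor:left-sign} on $[0,q)$ and \Cref{thm:hard-full} on $[q,1]$. The only cosmetic difference is that you write $G_{\delta_q}(t)-G_{\delta_q}(q)$ directly as an integral of $\gamma$, which needs only boundedness and measurability, whereas the paper first checks continuity of the integrand across $t=q$ and then differentiates $G_{\delta_q}$.
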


\begin{proof}
The integrand $s\mapsto \E[u_x(s,X_s)^2]-s$ is continuous on $[0,1]$: on $[0,q]$ and $[q,1]$
this follows from the explicit formulas above, and the two one-sided limits at $q$
both equal $0$.  Therefore
\[
G_{\delta_q}'(t)=-\frac{\beta^2}{2}\bigl(\E\big[u_x(t,X_t)^2\big]-t\bigr).
\]
By Corollary~\ref{cor:left-sign},
\[
\E\big[u_x(t,X_t)^2\big]\ge t \qquad (0\le t<q),
\]
so $G_{\delta_q}'(t)\le 0$ on $[0,q)$.  By \Cref{thm:hard-full},
\[
\E\big[u_x(t,X_t)^2\big]\le t \qquad (q\le t\le 1),
\]
so $G_{\delta_q}'(t)\ge 0$ on $[q,1]$.  Therefore $G_{\delta_q}$ is nonincreasing on $[0,q]$ and
nondecreasing on $[q,1]$, hence is globally minimized at $q$.
\end{proof}

\begin{proof}[Proof of Theorem~\ref{t:main}]
By Proposition~\ref{prop:G-min}, the function $G_{\delta_q}$ attains its minimum at the unique point
of $\supp\delta_q$, namely at $q$.  \Cref{p:JT} therefore implies that $\delta_q$ minimizes $\Par$.

We now apply the Parisi formula, \Cref{thm:Talagrand}, to obtain
\[
F(\beta,h)=\inf_{\mu\in\Pr([0,1])}\Par(\mu)=\Par(\delta_q).
\]
Using \eqref{e:u-soft} at $t=0$, we have
\[
u(0,h)=\frac{\beta^2}{2}(1-q)+\E\big[\log\cosh(h+\beta\sqrt q Z)\big].
\]
Also,
\[
\delta_q([0,s])=
\begin{cases}
0, & s<q,\\
1, & s\ge q,
\end{cases}
\]
so
\[
\frac{\beta^2}{2}\int_0^1 s\,\delta_q([0,s])\,ds
=\frac{\beta^2}{2}\int_q^1 s\,ds
=\frac{\beta^2}{4}(1-q^2).
\]
Thus
\begin{align*}
\Par(\delta_q)
&=\log 2+\frac{\beta^2}{2}(1-q)+\E\log\cosh(h+\beta\sqrt q Z)-\frac{\beta^2}{4}(1-q^2)\\
&=\log 2+\E\big[\log\cosh(h+\beta\sqrt q Z)\big]+\frac{\beta^2}{4}(1-q)^2\\
&=\Phi_{\mathrm{RS}}(\beta,h).
\end{align*}
Hence $F(\beta,h)=\Phi_{\mathrm{RS}}(\beta,h)$.
\end{proof}

\begin{remark}
The uniqueness theorem for the Parisi minimizer \cite[Theorem 1]{ACunique}, together with 
\Cref{t:main}, implies the stronger statement that the Parisi minimizer is exactly
$\delta_q$.
\end{remark}

\printbibliography

\end{document}